\documentclass[11pt]{article}

\usepackage[T1]{fontenc}
\usepackage[utf8]{inputenc}
\usepackage{lmodern}
\usepackage[letterpaper,margin=1in]{geometry}
\usepackage{amsmath,amssymb,amsthm,mathtools}
\usepackage{booktabs}
\usepackage{aliascnt}
\usepackage{cite}
\numberwithin{equation}{section}
\usepackage[colorlinks=true,linkcolor=blue,citecolor=blue,urlcolor=blue]{hyperref}
\usepackage[nameinlink,noabbrev]{cleveref}

\newtheorem{theorem}{Theorem}[section]
\newaliascnt{lemma}{theorem}
\newtheorem{lemma}[lemma]{Lemma}
\aliascntresetthe{lemma}
\newaliascnt{proposition}{theorem}
\newtheorem{proposition}[proposition]{Proposition}
\aliascntresetthe{proposition}
\newaliascnt{corollary}{theorem}
\newtheorem{corollary}[corollary]{Corollary}
\aliascntresetthe{corollary}
\newaliascnt{conjecture}{theorem}
\newtheorem{conjecture}[conjecture]{Conjecture}
\aliascntresetthe{conjecture}
\theoremstyle{remark}
\newaliascnt{remark}{theorem}
\newtheorem{remark}[remark]{Remark}
\aliascntresetthe{remark}

\crefname{lemma}{lemma}{lemmas}
\Crefname{lemma}{Lemma}{Lemmas}
\crefname{proposition}{proposition}{propositions}
\Crefname{proposition}{Proposition}{Propositions}
\crefname{corollary}{corollary}{corollaries}
\Crefname{corollary}{Corollary}{Corollaries}
\crefname{conjecture}{conjecture}{conjectures}
\Crefname{conjecture}{Conjecture}{Conjectures}
\crefname{remark}{remark}{remarks}
\Crefname{remark}{Remark}{Remarks}

\newcommand{\R}{\mathbb R}
\newcommand{\C}{\mathbb C}
\newcommand{\E}{\mathbb E}
\newcommand{\eps}{\varepsilon}
\newcommand{\diag}{\operatorname{diag}}
\newcommand{\tr}{\operatorname{tr}}
\newcommand{\Cov}{\operatorname{Cov}}
\newcommand{\maxroot}{\operatorname{maxroot}}
\newcommand{\adj}{\operatorname{adj}}
\newcommand{\Hmom}{\mathsf H}
\DeclarePairedDelimiter{\norm}{\lVert}{\rVert}

\title{Improved Bounds for the Bilu--Linial Conjecture via Spectral Recovery from Mixed Determinantal Polynomials}
\author{Fangfang Lin\footnote{School of Mathematics and Statistics, Fuzhou University. Email: \href{mailto:2500310002@fzu.edu.cn}{2500310002@fzu.edu.cn}.}
\and
Hong Zhou\footnote{Corresponding author. School of Mathematics and Statistics, Fuzhou University. Email: \href{mailto:hong.zhou@fzu.edu.cn}{hong.zhou@fzu.edu.cn}.}}

\date{}

\hypersetup{
  pdfview=FitH,
  pdfstartview={FitH null},
  pdfpagelayout=OneColumn
}

\begin{document}

\maketitle

\begin{abstract}
The Bilu--Linial conjecture asks whether every finite $d$-regular graph with $d \geq 2$ admits an edge signing $\sigma$ whose signed adjacency matrix $A_\sigma$ has spectral radius at most $2\sqrt{d-1}$.
We prove that every signing meeting the mixed-root condition $r_{A_\sigma}\leq\sqrt{2(d-1)}$ satisfies
\[
  \rho(A_\sigma) < \frac{3+\sqrt5}{2}\sqrt{d-1},
\]
where $r_{A_\sigma}$ is the largest root of the mixed determinantal polynomial $\chi[A_\sigma,-A_\sigma]$.
The interlacing theorem of Ravichandran and Srivastava guarantees a signing satisfying the mixed-root condition, so our result improves the coefficient $2\sqrt2$ in their two-sided spectral bound.

In the proof, we construct a positive matrix-valued probability measure supported on the roots of $\chi[A_{\sigma},-A_{\sigma}]$. 
The second moment gives a simple matrix inequality $A_{\sigma}^2 + dI \preceq 4r_{A_\sigma}^2I$, which yields a preliminary coefficient $\sqrt{7}$.
Estimates for the fourth moment use information about short walks to obtain the coefficient $(3+\sqrt{5})/2$. 
With more graph structural assumptions, the coefficient improves to $\sqrt6$ for triangle-free graphs and to $\sqrt{(5+3\sqrt5)/2}$ for graphs of girth at least five.
As a result of independent interest, we extend the construction to $\chi[A_1,\ldots,A_k]$ for Hermitian matrices $A_1,\ldots,A_k$ with zero diagonal, and compute the first two moments explicitly.

Finally, an explicit signing of $K_8$ shows that the mixed-root condition alone cannot guarantee a coefficient below $(4+\sqrt5)/\sqrt6$.
\end{abstract}

\section{Introduction}

Ramanujan graphs are sparse regular graphs with essentially optimal spectral expansion~\cite{LPS88,HLW06}.
They arise in pseudorandomness, rapidly mixing random walks, communication networks~\cite{HLW06}, and error-correcting codes~\cite{SS96}. 
A connected $d$-regular graph is Ramanujan if every nontrivial adjacency eigenvalue satisfies
\[
  |\lambda|\leq2\sqrt{d-1};
\]
the trivial eigenvalue is $d$ (the bipartite case also includes $-d$).
This threshold is the spectral radius of the adjacency operator on the infinite $d$-regular tree~\cite{LPS88,HLW06}.
The Alon--Boppana bound shows that it is asymptotically optimal for families of $d$-regular graphs of increasing size~\cite{Nil91}.

Bilu and Linial proposed an approach to construct large Ramanujan graphs recursively by $2$-lifts~\cite{BL06}. 
An edge signing $\sigma:E\to\{-1,+1\}$ of a graph $G=(V,E)$ determines a $2$-lift whose adjacency spectrum consists of the eigenvalues of $G$, called the \emph{old} eigenvalues, and those of the signed adjacency matrix $A_\sigma$, called the \emph{new} eigenvalues. 
Thus, a $2$-lift of a Ramanujan graph remains Ramanujan if all new eigenvalues belong to $[-2\sqrt{d-1},2\sqrt{d-1}]$. 
This leads to the following conjecture~\cite{BL06}.

\begin{conjecture}[Bilu--Linial]
\label{conj:bilu-linial}
Every finite $d$-regular graph with $d\geq2$ has a signing $\sigma$ such that
\[
  \rho(A_\sigma)
  :=\max\{\lambda_{\max}(A_\sigma),-\lambda_{\min}(A_\sigma)\}
  \leq2\sqrt{d-1}.
\]
\end{conjecture}
The $2$-lift approach therefore needs to control both ends of a signed adjacency spectrum. 
Bilu and Linial proved a weaker bound $\rho(A_\sigma)=O(\sqrt{d\log^3d})$ in the same work~\cite{BL06}.

There has been considerable interest in \Cref{conj:bilu-linial}~\cite{LN23,Sri24}.
However, the progress on it stalled until a major breakthrough made by Marcus, Spielman, and Srivastava~\cite{MSS15}, who used interlacing families to obtain the one-sided estimate
\[
  \lambda_{\max}(A_\sigma)\leq2\sqrt{d-1}
\]
for every $d$-regular graph.
For bipartite graphs, every signed adjacency spectrum is symmetric about zero, so this also controls the smallest eigenvalue and proves \Cref{conj:bilu-linial} in the bipartite case.
Repeated $2$-lifts then give bipartite Ramanujan graphs of all degrees.

For general graphs, the two spectral extremes must be controlled for the same signing without this symmetry. 
Ravichandran and Srivastava used mixed determinantal polynomials to obtain
\begin{equation}
  \rho(A_\sigma)\leq2\sqrt{2(d-1)}.
  \label{eq:RS}
\end{equation}
To our knowledge, this is the best known bound to the Bilu-Linial conjecture so far.

Ravichandran and Srivastava's method first bounds the roots of an auxiliary polynomial and then converts that root bound into a bound on the signed adjacency spectrum~\cite[Theorem~6 and Section~7.2]{RS21}.
We improve the conversion step by extracting matrix moments from the mixed roots.

\subsection{Our contributions}

\subsection*{An improved bound for the Bilu-Linial conjecture}

For a real symmetric zero-diagonal matrix $A$, write
\[
r_A:=\maxroot \chi[A,-A](x),
\]
where $\chi$ is the normalized mixed determinantal polynomial defined in \Cref{sec:mixed-background}. 
Ravichandran and Srivastava~\cite{RS21} showed that the polynomials $\{ \chi[A_{\sigma},-A_{\sigma}] \}_{\sigma}$ form an interlacing family, and their average is a rescaled matching polynomial. 
The Heilmann--Lieb bound on the roots of the matching polynomial~\cite{HL72} together with the interlacing property guarantees the existence of a signing $\sigma$ satisfying
\begin{equation}
  r_{A_\sigma} \leq \sqrt{2(d-1)}.
  \label{eq:mixed-root-condition}
\end{equation}
We call \eqref{eq:mixed-root-condition} the \emph{mixed-root condition}.
The conversion step in~\cite{RS21} then shows that
\begin{equation}
  \rho(A)\leq 2r_{A}
  \label{eq:rho-rA}
\end{equation}
holds for any real symmetric zero-diagonal matrix $A$, which then yields \eqref{eq:RS}. 
%Note that \eqref{eq:RS} applies to every signing satisfying the condition \eqref{eq:mixed-root-condition}.

Our first result is an improvement to the conversion step.
\begin{theorem}[Two-sided signing bound]
\label{thm:main-signing}
For every finite $d$-regular graph with $d\geq2$,
\[
  \max_{\sigma:\,r_{A_\sigma}^2\leq2(d-1)}\rho(A_\sigma)
  <\frac{3+\sqrt5}{2}\sqrt{d-1}.
\]
In particular, every such graph admits a signing satisfying this bound.
\end{theorem}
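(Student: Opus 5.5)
The plan is to turn the mixed-root bound $r:=r_{A_\sigma}$ into spectral information through a positive matrix-valued measure. The abstract sketches such a measure; its construction belongs to the later sections of the paper, so its existence is an assumption here, not a result already stated. For $A=A_\sigma$, the mixed polynomial $\chi[A,-A]$ comes from $\det(xI-\text{(something built from } A\oplus -A))$ averaged over partitions. I would write the matrix resolvent identity
\[
  \frac{\text{matrix numerator}}{\chi[A,-A](x)}=\sum_j \frac{W_j}{x-\theta_j},
  \qquad W_j\succeq 0,
\]
where the $\theta_j$ are the (real) roots of $\chi[A,-A]$ and $\sum_j W_j=I$. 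Positivity of the residues would come from real-rootedness and interlacing of the numerator entries, in the spirit of the Cauchy interlacing used in~\cite{RS21}. The point is that every moment $M_k=\sum_j \theta_j^k W_j$ then satisfies $M_{2k}\preceq r^{2k} I$, because every $|\theta_j|\le r$. The roots are symmetric about $0$ since $\chi[A,-A](x)$ is even up to sign, so $\max|\theta_j|=r$.

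The second step is to compute the low moments explicitly in terms of $A$, by expanding the Laurent series of the resolvent at $x=\infty$. For the second moment I expect an identity of the form $4M_2=A^2+dI$. The normalization should make the mixed polynomial look like a $\tfrac12$-scaled version of $A$, and the diagonal of $A^2$ equals $d$. This gives $A^2+dI\preceq 4r^2 I$, hence $\lambda^2\le 4r^2-d\le 8(d-1)-d$ for every eigenvalue $\lambda$ of $A$, which yields roughly $\rho\le\sqrt7\sqrt{d-1}$.

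The third step, and the main obstacle, is the fourth moment. $M_4$ will be an explicit expression of the form
\[
  c_1A^4+c_2(DA^2+A^2D)+c_3(A\circ A)\cdots+\text{diagonal terms}.
\]
This expression involves walk counts of length $4$, namely closed walks and $4$-cycle or triangle corrections through $\diag(A^3)$ and $(A^2)\circ(A^2)$. I would bound the non-polynomial correction terms in operator norm using $d$-regularity. Each row of $A\circ A^{2}$, or of a similar Hadamard product, has controlled $\ell_1$ norm, e.g.\ at most $d(d-1)$. Combining the fourth moment with the second via $M_4\preceq r^2 M_2$, which holds since $\theta_j^4\le r^2\theta_j^2$, gives an inequality of the form
\[
  \lambda^4-a(d-1)\lambda^2-b(d-1)^2\le 0
\]
on an eigenvector of $A$, after using $r^2\le 2(d-1)$. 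Solving this quadratic in $\lambda^2$ should give $\lambda^2\le\bigl(\tfrac{3+\sqrt5}{2}\bigr)^2(d-1)$, since $\bigl(\tfrac{3+\sqrt5}{2}\bigr)^2=\tfrac{7+3\sqrt5}{2}$ is a root of $t^2-7t+1$. So I would aim for $a=7$ and $b=-1$, up to lower-order terms which must have a favorable sign to produce the strict inequality.

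The delicate part is the walk-correction terms. They are not functions of $A$ alone, so an eigenvector test does not apply directly. I would therefore first try to bound them by $\alpha(d-1)A^2+\beta(d-1)^2I$ in the Loewner order, via Gershgorin/Schur-type estimates. I would then check that the resulting constants are strong enough. The final claim, that such a signing exists, then follows by combining this with the interlacing-family guarantee \eqref{eq:mixed-root-condition} of~\cite{RS21}.
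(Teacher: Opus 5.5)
Your framework is the paper's: a positive matrix measure on the roots of $\chi[A,-A]$ with $\Hmom_0=I$, $\Hmom_1=A/2$, $\Hmom_2=(A^2+dI)/4$, then $\Hmom_4\preceq r_A^2\Hmom_2$ tested at a top eigenvector $u$. You leave the measure as an assumption. In the paper, positivity comes from the pencil identity $q_A(x)-t\,v^*G_A(x)v=\chi[A+2tvv^*,-A](x)$, which is real-rooted for every $t$ by the transversal (random-cut) representation, combined with Hermite--Kakeya--Obreschkoff. It does not come from interlacing of the numerator entries.

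The genuine gap is in your third step, which carries the entire improvement from $\sqrt7$ to $(3+\sqrt5)/2$. The method you propose there cannot work. The exact formula is
\[
16\Hmom_4=A^4+(3d-2)A^2+d^2I+(A\circ A)\circ A^2-A\diag(A^3)-\diag(A^3)A+\diag(A^4).
\]
Note that $\Hmom_4\neq\E M_\varepsilon^4$; there is a correction $-\tfrac12\Cov(\tr M_\varepsilon^2,M_\varepsilon^2)$.

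Suppose you control $\diag(A^3)$ by $\ell_1$ row bounds, $|(A^3)_{ii}|\le d(d-1)$. Then the term $u^\top(A\diag(A^3)+\diag(A^3)A)u=2\rho\tau$, with $\tau=\sum_iu_i^2(A^3)_{ii}$, is only bounded at order $\rho d^2\sim d^{5/2}$. That swamps all the other terms, which are of order $d^2$. You need the spectral bound $(A^3)_{ii}=(Ae_i)^\top A(Ae_i)\le d\rho$. For the sharp constant you need its refinement $(A^3)_{ii}\le d\rho-d^2/\rho$, obtained by compressing $A\preceq\rho I$ to $\mathrm{span}\{e_i,Ae_i\}$.

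Even with that refinement, bounding the remaining terms separately ($u^\top((A\circ A)\circ A^2)u\ge-d(d-1)$ and $(A^4)_{ii}\ge d^2$) yields only $16m_4\gtrsim\rho^4+d\rho^2+3d^2$. This gives $\rho^2\lesssim\frac{7+\sqrt{69}}{2}(d-1)$, which is worse than the second-moment bound $\rho^2\le 7d-8$.

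The missing idea is to estimate these terms jointly, at the eigenvector itself. Your concern that ``an eigenvector test does not apply directly'' is unfounded. At $u$ one has $u^\top A\diag(A^3)u=\rho\tau$. The rest becomes
\[
\sum_iu_i^2(A^4)_{ii}+2\sum_{\{i,j\}\in E}u_iu_j(A^2)_{ij}\ge d^2+\sum_e(w_ex_e^2+2b_ex_e),
\]
with $w_e=u_i^2+u_j^2$, $x_e=a_{ij}(A^2)_{ij}$, and $b_e=a_{ij}u_iu_j$.

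Completing the square on each edge loses only $\sum_e b_e^2/w_e\le d/4$, rather than $d^2$. Cauchy--Schwarz with $\sum_ew_ex_e=\tau$, $\sum_eb_e=\rho/2$, and $\sum_ew_e=d$ then gains $(\tau+\rho/2)^2/d$. This positive term offsets $-2\rho\tau$ precisely when triangles are heavily positive. Minimizing $-2\rho\tau+(\tau+\rho/2)^2/d$ over $\tau\le d\rho-d^2/\rho$ gives, to leading order,
\[
16m_4\ge\rho^4+2d\rho^2+2d^2+d^3/\rho^2 .
\]
Comparing with $8(\rho^2+d)(d-1)$ produces the factor $(y+1)(y^2-7y+1)/y$, where $y=\rho^2/(d-1)$. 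The $d^3/\rho^2$ term, which comes from the $d^2/\rho$ refinement, is what places the threshold at $\bigl(\tfrac{3+\sqrt5}{2}\bigr)^2$. Without it one gets only $3+\sqrt{15}$.

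Your choice $a=7$, $b=-1$ is read off from the answer rather than derived. Separate row-sum or Loewner estimates of the kind you describe do not reach it.
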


The coefficient $(3+\sqrt5)/2\approx2.61803$ improves the coefficient
$2\sqrt2\approx2.82843$ in \eqref{eq:RS}. 

\begin{remark}
The existence bound also holds for every finite simple graph of maximum degree at most $d$.
Indeed, the standard regular-completion theorem embeds such a graph as an induced subgraph of a finite simple $d$-regular graph; see Erd\H{o}s and Kelly~\cite[p.~65]{Erd67} for this result of K\"onig. 
Restricting a signing to an induced subgraph cannot increase the operator norm of its signed adjacency matrix.
\end{remark}

In the proof of \Cref{thm:main-signing}, we found that short cycles play an important role in the spectral radius estimation, which leads to stronger bounds when short cycles are absent.

\begin{corollary}[Bounds for graphs without short cycles]
\label{cor:short-cycles}
Let $A$ be a signed adjacency matrix of a finite $d$-regular graph, $d\geq2$, satisfying the mixed-root condition \eqref{eq:mixed-root-condition}. 
If the graph is triangle-free, then
\begin{equation}
  \rho(A)^2\leq
  \frac{5d-6+\sqrt{49d^2-92d+36}}2<6(d-1).
  \label{eq:triangle-free-bound}
\end{equation}
If its girth is at least five, then
\begin{equation}
  \rho(A)^2\leq
  \frac{5d-6+\sqrt{45d^2-88d+36}}2
  <\frac{5+3\sqrt5}{2}(d-1).
  \label{eq:girth-five-bound}
\end{equation}
Every graph in either class admits a signing with the corresponding bound.
\end{corollary}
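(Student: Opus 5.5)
The proof of \Cref{cor:short-cycles} follows the fourth-moment argument for \Cref{thm:main-signing}, with the short-walk counts specialized to graphs with no short cycles. Recall the construction: a positive matrix-valued probability measure $\mu$ supported on the roots of $\chi[A,-A]$, hence on $[-r_A,r_A]$, whose moments are explicit polynomials in $A$. The second moment gives $A^2+dI\preceq 4r_A^2 I$. For the fourth moment, I will write $\Hmom_4=\int x^4\,d\mu(x)$ as an explicit matrix combination of $A^4$, $A^2$, $I$, and a ``diagonal correction'' $D_4$. The correction collects the contributions of closed walks that revisit vertices. Two inequalities are then available from the support of $\mu$. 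The first is $\Hmom_4\preceq r_A^2\Hmom_2$. The second is the Hankel/Cauchy--Schwarz inequality $\Hmom_2^2\preceq \Hmom_4$ in the appropriate matrix sense, obtained by testing against $x^2$ with vector weights.

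Next I plug in the graph structure. For a signed adjacency matrix of a $d$-regular graph, $(A^2)_{vv}=d$. The diagonal of $A^4$ counts signed closed $4$-walks: the tree-like backtracking walks contribute $d^2+d(d-1)=2d^2-d$, and $4$-cycles contribute signed terms. Triangles enter only through off-diagonal terms of $A^2$ paired with $A$, that is, through $(A^3)_{vv}$ and the entries $A_{uv}(A^2)_{uv}$. If the graph is triangle-free, these mixed terms vanish. The only correction terms that remain are the $4$-cycle ones, which I bound in operator norm using the $4$-cycle count per edge, at most $(d-1)^2$. If the girth is at least five, the $4$-cycle terms vanish as well, and $\mathrm{diag}(A^4)=(2d^2-d)I$ exactly.

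In both cases, for a unit eigenvector $v$ of $A$ with eigenvalue $\lambda$, $|\lambda|=\rho(A)$, the fourth-moment inequality becomes a scalar quadratic inequality in $t=\lambda^2$:
\[
  t^2-(5d-6)\,t+c_d\leq 0 .
\]
Here $c_d$ comes from $r_A^2\leq 2(d-1)$ together with the remaining walk constants. Solving gives $t\leq\bigl(5d-6+\sqrt{(5d-6)^2-4c_d}\bigr)/2$. Matching the stated discriminants requires $(5d-6)^2-4c_d$ to equal $49d^2-92d+36$ in the triangle-free case, so $c_d=-6d^2+8d$ there. In the girth-five case it must equal $45d^2-88d+36$, so $c_d=-5d^2+7d$. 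The difference between the two cases, $d^2-d=d(d-1)$, is exactly the $4$-cycle correction bounded by $d(d-1)$ in the triangle-free case, which supports this bookkeeping.

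The strict comparisons are elementary. One has $49d^2-92d+36<(7d-6)^2=49d^2-84d+36$, which gives the bound $<6(d-1)$. The $\tfrac{5+3\sqrt5}{2}$ bound follows because $45d^2-88d+36<(3\sqrt5(d-1))^2+\ldots$; I would verify this by checking that it reduces to a linear inequality in $d$ valid for $d\geq2$. Existence of a good signing then follows from the Ravichandran--Srivastava interlacing guarantee of \eqref{eq:mixed-root-condition}.

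The main obstacle is the step in the triangle-free case where the $4$-cycle terms, which are not diagonal and carry signs, have to be bounded. I would first try to write them as $A^{\circ}$-type Hadamard products, for instance $(A\circ A^3)$ minus its tree part. I would then bound them by $\preceq d(d-1)I$ in the direction needed for the quadratic, using Gershgorin or the row-sum count of $4$-cycles through each vertex.
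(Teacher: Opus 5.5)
There is a genuine gap. Your outline has the right architecture: the triangle terms vanish, girth five makes $\diag(A^4)=(2d^2-d)I$, and the comparison goes through $\Hmom_4\preceq r_A^2\Hmom_2$. But none of the quantitative content is derived. The coefficient $5d-6$ and the constants $c_d$ are read off by matching the stated discriminants. They actually come from the exact fourth moment in an extremal direction $Au=\rho u$:
\[
16m_4=\rho^4+(3d-2)\rho^2+d^2-2\rho\tau+\sum_i u_i^2(A^4)_{ii}+2\sum_{\{i,j\}\in E}u_iu_j(A^2)_{ij}.
\]
Here the coefficient $3d-2$, rather than the $3d+2$ appearing in $16\E M_\varepsilon^4$, comes from the covariance correction $-\tfrac12\Cov(t_2,M_\varepsilon^2)$ in $\Hmom_4$. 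Combining this with $16m_4\le 16r_A^2m_2\le 8(d-1)(\rho^2+d)$ produces $(3d-2)-8(d-1)=-(5d-6)$. Without this identity your quadratic in $t$ is unjustified. The Hankel inequality $\Hmom_2^2\preceq\Hmom_4$ you list cannot replace it: it only gives $16m_4\ge(\rho^2+d)^2$, which reproduces the second-moment bound $\rho^2+d\le 8(d-1)$.

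The step you flag as the main obstacle is misdiagnosed, and the remedy you propose would fail. In the triangle-free case, $4$-cycles enter only in two places. One is $A^4$, which is evaluated exactly as $u^\top A^4u=\rho^4$. The other is the diagonal term $\sum_i u_i^2(A^4)_{ii}$. There is no off-diagonal signed $4$-cycle term and no $A\circ A^3$ term to control. What is needed is $(A^4)_{ii}\ge d^2$, i.e.\ a signed $4$-cycle contribution of at least $-d(d-1)$ relative to $2d^2-d$. Counting cannot give this. A vertex can lie on $\binom d2(d-1)$ four-cycles (e.g.\ in $K_{d,d}$), each contributing $\pm2$ to $(A^4)_{ii}$. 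A count- or Gershgorin-based bound therefore only yields a deficit of $d(d-1)^2$, which destroys the claimed root. The missing observation is the one-line sum-of-squares identity
\[
(A^4)_{ii}=\|A^2e_i\|_2^2=\sum_j(A^2)_{ij}^2\ge(A^2)_{ii}^2=d^2.
\]
This gives $16m_4\ge\rho^4+(3d-2)\rho^2+2d^2$, hence $\rho^4+(6-5d)\rho^2-6d^2+8d\le0$. For girth at least five, the exact value $(A^4)_{ii}=2d^2-d$ gives $\rho^4+(6-5d)\rho^2-5d^2+7d\le0$. With these in place, your strict comparisons and your appeal to the interlacing existence result go through.
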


The resulting coefficients of $\sqrt{d-1}$ are $\sqrt6\approx2.44949$ and $\sqrt{(5+3\sqrt5)/2}\approx2.41953$. 
These bounds hold for every signing satisfying the mixed-root condition, including on non-bipartite graphs. For bipartite graphs, the theorem of Marcus,
Spielman, and Srivastava already gives the optimal existence bound~\cite{MSS15}.

\subsection*{A general spectral recovery theorem}

The key question underlying our improvement in \Cref{thm:main-signing} is the following: how much spectral information of a tuple of matrices other than the spectral edge can be recovered from the roots of the mixed determinantal polynomial?
We prove a structural result which gives an explicit second-moment refinement of the simultaneous spectral comparison in~\cite[Theorem~6]{RS21}.
Here $\diag(B)$ is the diagonal matrix with the same diagonal as $B$.

\begin{theorem}[Spectral recovery for matrix tuples]
\label{thm:tuple-recovery}
Let $A_1,\ldots,A_k$ be zero-diagonal Hermitian matrices. If the roots
of the mixed determinantal polynomial $\chi[A_1,\ldots,A_k]$ lie in $[\ell,r]$, then
\begin{equation}
  \left(A_j-\frac{k(\ell+r)}2I\right)^2
  +\sum_{i\ne j}\diag(A_i^2)
  \preceq\frac{k^2(r-\ell)^2}{4}I, \quad \forall j \in [k].
  \label{eq:tuple-interval}
\end{equation}
In particular, if the roots lie in $[-R,R]$, then
\begin{equation}
  A_j^2+\sum_{i\ne j}\diag(A_i^2)\preceq k^2R^2I.
  \label{eq:tuple-symmetric}
\end{equation}
Equality in \eqref{eq:tuple-symmetric} can hold simultaneously for all
$j$.
\end{theorem}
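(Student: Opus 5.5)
The plan is to realize $\chi[A_1,\ldots,A_k]$ as the expected characteristic polynomial of a random Hermitian matrix and read off a positive matrix-valued measure on its roots. Recall that $\chi[A_1,\ldots,A_k](x)$ is (up to normalization) $\E\det(xI-\sum_i P_iA_iP_i^*)$ or equivalently $\det(xI - \sum_i Z_i^{*} \ldots)$; the route I would take uses the resolvent identity for mixed determinants. For a real-rooted polynomial $p$ with roots $\lambda_1,\ldots,\lambda_m$ in $[\ell,r]$, I would build a matrix-valued function $M(x)$ (for instance from the adjugate of the mixed determinantal pencil, or from the derivative of $\chi$ with respect to a diagonal perturbation $\chi[A_1+\diag(t),\ldots]$) such that
\[
  \frac{M(x)}{\chi(x)}=\sum_{s}\frac{W_s}{x-\lambda_s},\qquad W_s\succeq0,\quad \sum_s W_s=I .
\]
Positivity of the residues $W_s$ should follow from real stability: the map $t\mapsto \chi[\ldots](x)$ with a rank-one or diagonal perturbation $xI+t\,vv^*$ is real stable, so the partial fraction coefficients of $\partial_t\log\chi$ in $x$ are nonnegative, which gives $v^*W_sv\ge0$ for every $v$. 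The normalization $\sum_sW_s=I$ comes from the leading $x^{-1}$ coefficient of the Laurent expansion at infinity.

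Next I would compute the first two moments $\sum_s\lambda_sW_s$ and $\sum_s\lambda_s^2W_s$ by expanding $M(x)/\chi(x)$ at $x=\infty$ to order $x^{-3}$. Because the $A_i$ have zero diagonal, the first moment should be $\frac1k A_j$ (after choosing the measure attached to index $j$, i.e.\ differentiating in the $j$-th block), and the second moment should be $\frac1{k^2}\bigl(A_j^2+\sum_{i\neq j}\diag(A_i^2)\bigr)$; the diagonal appears because the cross terms for $i\neq j$ involve independent random unitaries/signs whose averages kill off-diagonal entries. Getting these exact coefficients and the $1/k$ scaling right is the step I expect to be the main obstacle, since it requires a careful bookkeeping of the mixed determinant expansion rather than any conceptual input; I would first verify it for $k=2$, $A_2=-A_1$, $2\times2$ matrices.

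Given the moments, the inequality is immediate: with $c=\frac{\ell+r}2$ and $h=\frac{r-\ell}2$, every root satisfies $(\lambda_s-c)^2\le h^2$, so $\sum_s(\lambda_s-c)^2W_s\preceq h^2I$. Expanding the left side with the moments gives $\frac1{k^2}\bigl(A_j^2+\sum_{i\ne j}\diag(A_i^2)\bigr)-\frac{2c}{k}A_j+c^2I\preceq h^2 I$, and multiplying by $k^2$ and completing the square yields \eqref{eq:tuple-interval}. Setting $\ell=-R$, $r=R$ gives \eqref{eq:tuple-symmetric}. For the equality claim I would exhibit an example such as $A_i$ all equal to a $2\times 2$ matrix $\begin{pmatrix}0&1\\1&0\end{pmatrix}$ (or a permutation-type tuple) where the measure is supported on $\pm R$ with $W_s$ splitting $I$, so $(\lambda_s^2-R^2)W_s=0$ and equality holds for every $j$.
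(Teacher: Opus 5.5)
Your outline matches the paper's argument. \Cref{thm:matrix-measure} builds exactly such a measure: $W^{(j)}_\lambda\succeq0$ with $\sum_\lambda W^{(j)}_\lambda=I$, first moment $A_j/k$, and second moment $k^{-2}\bigl(A_j^2+\sum_{i\ne j}\diag(A_i^2)\bigr)$. The theorem is then read off from $\sum_\lambda(\lambda-\ell)(r-\lambda)W^{(j)}_\lambda\succeq0$, which is your $(\lambda-c)^2\le h^2$. Your equality example is the case $a_i=1$ of the paper's, where $\chi=x^2-1/k$.

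Your positivity mechanism is joint real stability of $\chi[\ldots,A_j-tvv^*,\ldots](x)$ in $(x,t)$. This does hold: apply $\prod_i\partial_{z_i}^{k-1}$ and set $z_i=x$ in the real stable $\det(Z_k+t\,E_{jj}\otimes vv^*-\diag(A_1,\ldots,A_k))$. It makes the scalar quotients have simple real poles with nonnegative residues, and is a fine substitute for the paper's real-rootedness for every real $t$ plus Hermite--Kakeya--Obreschkoff.

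What is missing is the step you yourself flag. The two moment identities are asserted, not derived, and they are the whole content of the theorem beyond bounded support. You also have not fixed which matrix function they are the moments of.

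Of the candidates you name, only the rank-one perturbation of the $j$-th input works. The right object is $G^{(j)}$ with $v^*G^{(j)}v=-k\,\partial_t\chi[\ldots,A_j+tvv^*,\ldots]$. In your (correct) representation $\chi=\E_S\det(xI-B_S)$, with $B_S=\sum_iP_{S_i}A_iP_{S_i}$ over a uniform random ordered partition $S$, this is $G^{(j)}=k\,\E_S[P_{S_j}\adj(xI-B_S)P_{S_j}]$. The other two candidates fail:
\begin{itemize}
\item Since $\adj(xI-B_S)$ is block diagonal along $S$, the plain adjugate $\E_S\adj(xI-B_S)$ equals $\frac1k\sum_jG^{(j)}$. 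Its measure has mean $k^{-2}\sum_iA_i$, which vanishes for $(A,-A)$.
\item A diagonal perturbation recovers only the diagonal entries of $W_s$, which cannot give a matrix inequality.
\end{itemize}

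Even with $G^{(j)}$ the moments do not come out by inspection, because the leading coefficient $kP_{S_j}$ is random. The diagonal of $k\,\E_S[P_{S_j}B_S^2P_{S_j}]$ is $(A_j^2)_{ii}/k$. Only after adding the covariance correction $-\tfrac k2\Cov(\tr B_S^2,P_{S_j})$ coming from the expansion of $\chi^{-1}$ does one reach $k^{-2}\sum_a(A_a^2)_{ii}$.

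The paper avoids this bookkeeping with the Fourier coupling $M_\eta=\frac1k\sum_{a=0}^{k-1}D_\eta^aA_{a+1}D_\eta^{-a}$. By unitary invariance of the transversal polynomial, $\chi[A_1+ktvv^*,A_2,\ldots,A_k]=\E_\eta\det(xI-M_\eta-tvv^*)$. Then $G=\E_\eta\adj(xI-M_\eta)$ has leading coefficient exactly $I$, the trace terms cancel, and $H_1=\E M_\eta$, $H_2=\E M_\eta^2$. Orthogonality of the characters $\omega^{a\eta_i}$ then yields both formulas in a few lines. Either computation closes the gap, but one of them must actually be carried out.
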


The proof constructs positive matrix weights supported on the mixed
roots. Their mean is $A_j/k$, and their variance is
$k^{-2}\sum_{i\ne j}\diag(A_i^2)$; see \Cref{thm:matrix-measure}.
When $k=2$ and $A_1 = A, A_2 = -A$, \eqref{eq:tuple-symmetric} gives
\begin{equation}
  A^2+\diag(A^2)\preceq4r_A^2I.
  \label{eq:intro-pair-comparison}
\end{equation}
Since $\diag(A^2)=dI$ for a signing of a $d$-regular graph, this already
implies $\rho(A)\leq\sqrt{7d-8}$ under the mixed-root condition, which improves \eqref{eq:RS}.
By considering the fourth moment, we can strengthen \eqref{eq:intro-pair-comparison} via some graph structures, which eventually leads to \Cref{thm:main-signing} and \Cref{cor:short-cycles}.

\subsection*{Limitation of the conversion step}

We complement \Cref{thm:main-signing} by giving a lower bound on the optimal universal conversion constant
\begin{equation}
  C_{\rm mix}:=
  \sup_{\substack{d\geq2\\G\text{ finite }d\text{-regular}}}
  \ \max_{\sigma:\,r_{A_\sigma}^2\leq2(d-1)}
  \frac{\rho(A_\sigma)}{\sqrt{d-1}}.
  \label{eq:Cmix}
\end{equation}
Note that the inner set is nonempty by the interlacing argument recalled above.

\begin{theorem}[Lower bound on universal conversion]
\label{thm:lower}
The complete graph $K_8$ has a signing with signed adjacency matrix $A$ such that
\[
  r_A^2=7+2\sqrt6<12=2(d-1),\qquad \rho(A)=4+\sqrt5,
\]
where $d=7$. Consequently,
\begin{equation}
  \frac{4+\sqrt5}{\sqrt6}\leq C_{\rm mix}
  \leq\frac{3+\sqrt5}{2}.
  \label{eq:Cmix-bounds}
\end{equation}
\end{theorem}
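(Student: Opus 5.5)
We will prove \Cref{thm:lower} by an explicit construction plus exact computation. The natural candidate is a signing of $K_8$ whose signed adjacency matrix has very few distinct eigenvalues, so that both $\rho(A)$ and $\chi[A,-A]$ are computable in closed form. We try first a signing invariant under a large automorphism group, for example one built from $\Z_7$ acting on seven vertices plus a fixed eighth vertex: the negative edges among the seven vertices form a circulant (quadratic residues or nonresidues mod $7$), and the edges to the eighth vertex get a uniform sign. Then $A$ block-diagonalizes under the characters of $\Z_7$. Since $\tr A=0$, $\tr A^2=56$, and the eigenvalues are supposed to include $-(4+\sqrt5)$ or $4+\sqrt5$, we solve for the circulant pattern whose spectrum contains the conjugate pair $\pm$-shifted values $a\pm\sqrt5$ with $a$ forced by $\tr A=0$ and $\tr A^2=56$. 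From this spectrum we read off $\rho(A)=4+\sqrt5$.

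The second step computes $r_A$. We use the definition of $\chi[A,-A]$ from \Cref{sec:mixed-background}, which, for two summands, is (up to normalization) the mixed characteristic polynomial $\E\det(xI-(P_1AP_1^*+\dots))$, or equivalently the expression obtained by applying $\prod_i(1-\partial_{z_i})$-type operators to $\det(xI+\sum z_iA_i)$. Because the symmetry group acts transitively enough, each principal-minor sum appearing in the expansion depends only on the orbit type of the index subset. So $\chi[A,-A]$ reduces to a sum over subsets $S\subseteq[8]$ of terms $\det(A_S)\det(-A_{S^c})$-type products, or the analogous pairing dictated by the definition. We evaluate these sums by classifying subsets by size and by the number of negative edges. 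The result is a degree-$8$ polynomial, and we expect it to be even, since the pair $(A,-A)$ is symmetric under swapping $A\leftrightarrow -A$. Factoring it, we then verify that its largest root squared is $7+2\sqrt6=(1+\sqrt6)^2$, so that $r_A=1+\sqrt6$. Finally, $7+2\sqrt6\approx11.9<12$.

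The consequence is immediate. Since $r_A^2<2(d-1)$, this $\sigma$ lies in the inner set of \eqref{eq:Cmix}, so $C_{\rm mix}\ge (4+\sqrt5)/\sqrt{6}$. The upper bound on $C_{\rm mix}$ is exactly \Cref{thm:main-signing}.

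The main obstacle is the construction itself: we must find the right signing, and then carry out the exact computation of $\chi[A,-A]$ over all $2^8$ subsets and extract its maximal root. If the $\Z_7$ ansatz fails to produce $4+\sqrt5$, we fall back on a computer search over switching classes of signings of $K_8$, modulo switching and automorphisms, which is a manageable finite set. For each class we compute $\chi$ symbolically and certify the final polynomial by exact factorization over $\mathbb{Q}(\sqrt5,\sqrt6)$.
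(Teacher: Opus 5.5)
\textbf{There is a genuine gap.} The theorem is an existence statement. Its entire content is an explicit signing plus two exact computations, and you supply neither; you outline a search, and the search you commit to cannot succeed.

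First, ``quadratic residues mod $7$'' does not define an undirected signing, since $-1$ is a nonresidue mod $7$. The $\mathbb{Z}/7\mathbb{Z}$-invariant signings of the $7$-clique are determined by signs $s_1,s_2,s_3$ on the difference classes $\{\pm1\},\{\pm2\},\{\pm3\}$. Put a uniform sign $s_\infty$ on the edges to the eighth vertex. A nontrivial character $\theta=2\pi k/7$ then gives the eigenvalue $2(s_1\cos\theta+s_2\cos2\theta+s_3\cos3\theta)$. This lies in the cubic field $\mathbb{Q}(\cos(2\pi/7))$, which contains no $\sqrt5$. The trivial character gives the block $\begin{pmatrix}c&s_\infty\sqrt7\\ s_\infty\sqrt7&0\end{pmatrix}$ with $c=2(s_1+s_2+s_3)\in\{\pm2,\pm6\}$. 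Its eigenvalues are $\pm1\pm2\sqrt2$, or $\{7,-1\}$, or $\{-7,1\}$. So $\pm(4+\sqrt5)$ never occurs in this family. In any case, the identities $\tr A=0$ and $\tr A^2=56$ cannot ``force'' a pair $a\pm\sqrt5$.

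Your computer-search fallback would presumably locate a suitable signing, since $\rho(A)$ and $q_A$ are invariant under switching and under $A\mapsto-A$. As written, though, it is a plan rather than a proof. Your description of $\chi[A,-A]$ is also off. It is not the Marcus--Spielman--Srivastava mixed characteristic polynomial. The summands in \eqref{eq:mixed-determinantal-polynomial} are products $\det(xI-A[S])\det(xI+A[S^c])$ of characteristic polynomials, not of determinants.

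\textbf{What is missing} is the concrete example and an efficient way to evaluate $q_A$. The paper takes the signing with exactly two positive edges, $\{1,2\}$ and $\{1,3\}$, and all other edges negative.

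\emph{Spectral radius.} The equitable partition $\{1\},\{2,3\},\{0,4,5,6,7\}$ gives $\det(xI-A)=(x-1)^5(x-3)(x^2+8x+11)$, so $\rho(A)=4+\sqrt5$.

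\emph{Mixed root.} The paper uses the random-cut identity \eqref{eq:random-cut-characteristic}. Every cut matrix $M_\eps$ is bipartite with biadjacency block $-J+R$. Because the two positive edges share vertex $1$, $R$ lives in a single row or column. Hence $\operatorname{rank}M_\eps\le4$ and $\det(xI-M_\eps)=x^8-a_\eps x^6+b_\eps x^4$.
\begin{itemize}
\item The coefficient $a_\eps$ counts cut edges, so $\E a_\eps=14$.
\item The coefficient $b_\eps$ is the sum of $4\times4$ principal minors. Each minor is $4$ exactly when the $4$-set is cut $2+2$ (probability $1/8$ for a given pairing) and the alternating $4$-cycle is negative. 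So $\E b_\eps$ is half the number of negative $4$-cycles, namely $(30+30-2\cdot5)/2=25$.
\end{itemize}
Thus $q_A=x^4(x^4-14x^2+25)$ and $r_A^2=7+2\sqrt6<12$.

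Given these two facts, your final deduction of \eqref{eq:Cmix-bounds} is correct, with the upper bound taken from \Cref{thm:main-signing}.
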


The lower bound is approximately $2.54586$. 
It shows that the mixed-root condition allows signings whose spectral radius exceeds the Ramanujan threshold. 
Therefore, the simple plan of selecting signings via mixed-root condition, and then conducting the conversion independently cannot resolve Bilu-Linial conjecture.

\subsection{Technical overview}
\label{sec:overview}

Fix a signed adjacency matrix $A$ of a $d$-regular graph satisfying the mixed-root condition. 
The roots of $\chi[A,-A]$ lie in $[-r_A,r_A]$, but they are not the eigenvalues of $A$. 
To relate these two sets of numbers, we assign probabilities to the mixed roots in each eigenvector direction. 
More precisely, for a unit eigenvector of $A$ with eigenvalue $\theta$, we construct a real random variable $X$ supported on the roots of $\chi[A,-A]$ such that
\[
  \E X=\frac\theta2,\qquad
  \E X^2=\frac{\theta^2+d}{4}.
\]
Since $|X|\leq r_A$, the first identity gives $|\theta| \leq 2 r_A$, which suffices for \eqref{eq:RS}.
The second identity is the first source of improvement, which gives $\theta^2+d\leq4r_A^2$. 
The squared eigenvalue controls both ends of the spectrum at once, yielding $\rho(A)\leq\sqrt{7d-8}$.

The main challenge is to construct an appropriate probability measure.
The starting point is to observe a random bipartite cut representation of the mixed determinantal polynomial $\chi[A,-A]$. 
Let $\eps \in \{\pm 1\}^n$ be a vertex signing and $D_{\eps} = \diag(\eps_1, \ldots, \eps_n)$. 
Then, $M_{\eps} = (A - D_{\eps} A D_{\eps})/2$ is the signed adjacency matrix of a bipartite cut defined by $\eps$.
We can show that the expected characteristic polynomial $\E_{\eps} \det(xI - M_{\eps})$ is exactly $\chi[A,-A]$.
The next step is to consider a matrix-valued rational function defined as the ratio of the expected adjugate and expected characteristic polynomial
\[ F_A(x) = \frac{\E_{\eps} \adj(xI - M_{\eps})}{\E_{\eps} \det(xI - M_{\eps}) }, \]
which is an analogue of a resolvent.
The essential step is to prove that $F_A$ admits a positive-semidefinite residue decomposition
\[
	F_A(x) = \sum_{\lambda \in \Lambda_A} \frac{W_{\lambda}}{x - \lambda}, \qquad W_{\lambda} \succeq 0 \quad \text{and} \quad \sum_{\lambda \in \Lambda_A} W_{\lambda} = I,
\]
where $\Lambda_A$ is the set of distinct roots of $\chi[A,-A]$.
Thus, $\{W_{\lambda}\}_{\lambda}$ yields a matrix-valued probability measure.
The moments can be obtained by expanding $F_A(x)$ at infinity, which yields
\[
	\sum_{\lambda \in \Lambda_A} \lambda W_{\lambda} = \frac12 A, \qquad \sum_{\lambda  \in \Lambda_A} \lambda^2 W_{\lambda} = \frac14 (A^2 + \diag(A^2)).
\]
Let $u \in \R^n$ be a unit eigenvector of $A$ corresponding to $\theta$.
Then, $\{u^T W_{\lambda} u\}_{\lambda}$ defines a probability measure for $X$ with the desired moments mentioned above.

The stronger bound in \Cref{thm:main-signing} uses the same probability measure and the elementary inequality $\E X^4\leq r_A^2\E X^2$. 
Its effectiveness depends on expressing the fourth moment in terms of $A$. 
Powers of a signed adjacency matrix count walks, with each walk weighted by the product of its edge signs. 
The fourth-moment formula therefore involves short walks and signed triangles. 
The analysis is more delicate.
We estimate the triangle terms together with the positive terms that accompany them. Completing squares over edges and restricting $A$ to a two-dimensional space at each vertex gives a lower bound on $\E X^4$ in terms of $\rho(A)$ and $d$.
Comparing it with the root-support upper bound yields the coefficient $(3+\sqrt5)/2$. 
Without triangles, several terms vanish. Without four-cycles as well, the closed walks of length four are all backtracking walks, which explains the further improvement in \Cref{cor:short-cycles}.

The key in the above plan is a matrix-valued probability measure argument with a pair of matrices $(A, -A)$ as input.
We generalize the argument for a tuple of Hermitian matrices, where a Fourier change of basis replaces the random-cut construction.
This gives the common matrix measure construction in \Cref{sec:measure}
and the exact variance behind \Cref{thm:tuple-recovery}.

\subsection{Relation to the polynomial framework}

The proof uses established results on stable polynomials and mixed determinants.
Borcea and Br\"and\'en developed the relevant stability framework for mixed principal-minor determinants~\cite{BB08}.
Ravichandran and Srivastava used mixed determinantal polynomials for simultaneous spectral control and graph signings~\cite{RS21}. 
The transversal polynomial in \Cref{sec:transversals} is the $(k,n)$-characteristic polynomial of Xu, Xu, and Zhu; its differential formula and real-rootedness are their Propositions~3.3 and~3.5~\cite{XXZ23}.

The scalar interlacing argument for positive residues and the bounded-support variance inequality are standard; the latter is the principle behind the Bhatia--Davis inequality~\cite{BD00}.
Our contribution is the matrix-valued representation of the mixed roots, the explicit moments that it yields, and the graph estimates that improve the two-sided spectral bound.

\subsection{Organization}

\Cref{sec:preliminaries} records notation and background on mixed polynomials, transversal compressions, and signing selection.
\Cref{sec:measure} proves the matrix measure theorem and derives its
spectral consequences. 
\Cref{sec:main-result} computes and estimates the fourth moment for graph signings.
\Cref{sec:signing-bounds} proves our graph signing bounds, and \Cref{sec:lower-bound} gives the $K_8$ lower bound example. 
\Cref{sec:conclusion} provides a concluding remark.

\section{Notation and polynomial background}
\label{sec:preliminaries}

\subsection{Notation}
\label{sec:notation}

In this paper, all graphs are finite, simple, and undirected. 
For $V=[n]$, a signing $\sigma:E\to\{-1,+1\}$ gives a real symmetric matrix $A_\sigma$ whose $(i,j)$-entry is the sign of $\{i,j\}$ on edges and is zero otherwise.
For a Hermitian matrix $A$, its spectral radius is $\rho(A)=\norm A_{\rm op}$. 
We write $A^*$ for the adjoint, $A\succeq0$ for positive semidefiniteness, and $A\preceq B$ when $B-A\succeq0$. 
The identity matrix is denoted by $I$, with its size understood from the context. The entrywise product is denoted by $\circ$.

We use $\diag(A_1, \ldots, A_k)$ to denote block-diagonal matrix form from square matrices $A_1, \ldots, A_k$.
For the ease of notation, we also use $\diag(A)$ to denote the diagonal matrix with the same diagonal as $A$.
For Hermitian $A$,
\[
  (\diag(A^2))_{ii}=\sum_j|a_{ij}|^2,
  \qquad \Delta_A:=\diag(A^2)=dI
  \quad\text{for a signed $d$-regular graph}.
\]
For $S\subseteq[n]$, $A[S]$ denotes the corresponding principal submatrix. 
The characteristic polynomial of the empty submatrix is $1$.
For a nonconstant real-rooted polynomial $p$, $\maxroot p$ denotes its largest root.

\subsection{Mixed determinantal polynomials}
\label{sec:mixed-background}

For Hermitian $n\times n$ matrices $A_1,\ldots,A_k$, define the normalized mixed determinantal polynomial by
\begin{equation}
  \chi[A_1,\ldots,A_k](x)
  :=k^{-n}\sum_{S_1\sqcup\cdots\sqcup S_k=[n]}
               \prod_{j=1}^k\det(xI-A_j[S_j]).
  \label{eq:mixed-determinantal-polynomial}
\end{equation}
The sum is over ordered partitions, allowing empty parts. 
The polynomial is monic of degree $n$ and is invariant under permuting its input matrices. 
It is also real-rooted and the transversal formula below provides a determinantal proof.

For a real symmetric zero-diagonal matrix $A$, we use
\begin{equation*}
  q_A(x):=\chi[A,-A](x),\qquad r_A:=\maxroot q_A.
\end{equation*}
Exchanging the two parts in \eqref{eq:mixed-determinantal-polynomial} gives $q_A(-x)=(-1)^nq_A(x)$. 
Its roots therefore belong to $[-r_A,r_A]$, and $q_{-A}=q_A$.

\subsection{Transversal compressions}
\label{sec:transversals}

Partition the coordinates of $\C^{kn}$ into the groups $\{i,n+i,\ldots,(k-1)n+i\}$, for $i\in[n]$. 
A \emph{transversal} chooses one coordinate from each group. When viewing the compression as an $n\times n$ matrix, we order its coordinates by $i$.
For a Hermitian $kn\times kn$ matrix $H$, define
\[
  \psi_H(x):=k^{-n}\sum_{T\text{ transversal}}\det(xI-H[T]).
\]
This is the $(k,n)$-characteristic polynomial of Xu, Xu, and Zhu~\cite{XXZ23}.
We record their differential formula and real-rootedness result, together with the unitary invariance that the formula gives.
Here we include a proof sketch for the sake of completeness.

\begin{lemma}[Proposition 3.3 and Proposition 3.5 of \cite{XXZ23}]
\label{lem:transversal-compressions}
Let $Z=\diag(z_1,\ldots,z_n)$ and $Z_k=I_k\otimes Z$. Then
\begin{equation*}
  \psi_H(x)=\left.
  \frac1{(k!)^n}\prod_{i=1}^n\partial_{z_i}^{\,k-1}\det(Z_k-H)
  \right|_{z_1=\cdots=z_n=x}.
\end{equation*}
The polynomial $\psi_H$ is real-rooted. Moreover,
$\psi_{U^*HU}=\psi_H$ for every unitary matrix $U$ commuting with
$Z_k$ for all choices of $z_1,\ldots,z_n$.
\end{lemma}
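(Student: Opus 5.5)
We prove the three assertions of \Cref{lem:transversal-compressions} in order: the differential formula first, then unitary invariance and real-rootedness, which both follow from it.

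\textbf{The differential formula.} Write $D(z):=\det(Z_k-H)$. Each $z_i$ appears only on the $k$ diagonal entries indexed by the group $\{i,n+i,\ldots,(k-1)n+i\}$. Expanding $D$ over principal minors gives
\[
  D(z)=\sum_{S\subseteq[kn]}\Bigl(\prod_{s\notin S}z_{g(s)}\Bigr)(-1)^{|S|}\det H[S],
\]
where $g(s)$ is the group index of coordinate $s$. The operator $\prod_i\partial_{z_i}^{k-1}$ keeps only those $S$ that leave out at least $k-1$ coordinates of every group. Equivalently, $S$ meets each group in at most one coordinate, so $S$ is a subset of some transversal. For such an $S$, the monomial is $\prod_i z_i^{k-1}$ times $\prod_{i:\,S\cap\text{group}_i=\emptyset}z_i$. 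Two small counts are needed:
\begin{itemize}
  \item If $S$ meets group $i$, the factor there is $z_i^{k-1}$, and $\partial^{k-1}$ yields $(k-1)!$.
  \item If $S$ misses group $i$, the factor is $z_i^{k}$, and $\partial^{k-1}$ yields $k!\,z_i$.
\end{itemize}
Now compare with $k^{-n}\sum_T\det(xI-H[T])=k^{-n}\sum_T\sum_{S\subseteq T}x^{n-|S|}(-1)^{|S|}\det H[S]$. A fixed $S$ lies in exactly $k^{\#\{\text{groups missed}\}}$ transversals. Set $m$ to be the number of groups that $S$ misses. Then both sides give the coefficient $x^{m}(-1)^{|S|}\det H[S]$ times $k^{m-n}$, because
\[
  \frac{(k-1)!^{\,n-m}\,k!^{\,m}}{k!^{\,n}}=k^{m-n}.
\]
Setting all $z_i=x$ then matches term by term. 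This bookkeeping is the only real content, and I expect it to be the main (mild) obstacle.

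\textbf{Unitary invariance.} If $U$ commutes with $Z_k$ for all $z$, then $U$ preserves the joint eigenspaces of $Z_k$, which are the coordinate groups. Hence $\det(Z_k-U^*HU)=\det(U^*(Z_k-H)U)=\det(Z_k-H)$ identically in $z$. Applying the differential formula shows $\psi_{U^*HU}=\psi_H$.

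\textbf{Real-rootedness.} The polynomial $D(z)=\det(\sum_i z_iE_i-H)$ has $E_i\succeq0$ (the group projections, summing to $I$) and $H$ Hermitian. By Borcea--Brändén~\cite{BB08}, $D$ is real stable. Partial derivatives $\partial_{z_i}$ preserve real stability, and so does the diagonalization $z_1=\cdots=z_n=x$. A real stable univariate polynomial with real coefficients is real-rooted. It is also nonzero, since it is monic of degree $n$, as seen from the top term above. This shows $\psi_H$ is real-rooted.
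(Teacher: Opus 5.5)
Your proposal is correct and follows the paper's argument. Your monomial expansion over partial transversals is an explicit version of the paper's remark that $\partial_{z_i}^{k-1}$ keeps one coordinate per group with factor $(k-1)!$, giving $((k-1)!)^n/(k!)^n=k^{-n}$. The invariance is the same identity $\det(Z_k-U^*HU)=\det(Z_k-H)$. The one difference is that the paper cites \cite{XXZ23} for real-rootedness, whereas you supply the standard proof: real stability of $\det(\sum_i z_iE_i-H)$ via \cite{BB08}, preservation under $\partial_{z_i}$ and under setting $z_1=\cdots=z_n=x$, and nonvanishing because $\psi_H$ is monic. That proof is valid.
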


\begin{proof}
The differential formula and real-rootedness are \cite[Propositions~3.3 and~3.5]{XXZ23}. 
To recall the normalization, differentiating $k-1$ times in $z_i$ deletes all but one coordinate of the $i$th group, with factor $(k-1)!$. 
The total factor is $((k-1)!)^n$, which becomes $k^{-n}$ after division by $(k!)^n$.
Finally, commutation with $Z_k$ gives $\det(Z_k-U^*HU)=\det(Z_k-H)$, so the differential formula gives the stated invariance.
\end{proof}

Taking $H=\diag(A_1,\ldots,A_k)$ identifies $\psi_H$ with $\chi[A_1,\ldots,A_k]$ (see \cite[eq. (3.6)]{XXZ23}), i.e.,
\begin{equation} \label{eq:mixed-psi}
	\chi[A_1, \ldots, A_k](x) = \psi_{\diag(A_1,\ldots,A_k)}(x)
\end{equation}
is a real-rooted polynomial.
These facts hold for arbitrary Hermitian inputs, including after a rank-one perturbation, e.g., $\chi[A_1 + tvv^*, \ldots, A_k]$ is also real-rooted for any $t \in \R$.

\subsection{Positive residues of a real-rooted pencil}

We will also need the following simple corollary of the Hermite–Kakeya–Obreschkoff theorem and the partial-fraction characterization of interlacing.
See, e.g., \cite[Section~2.3, especially Exercise~2.5]{Wag11}.
We include a proof sketch for completeness.

\begin{lemma}[Positive residues of a real-rooted pencil] \label{lem:residue-decomposition} 
Let $q$ be monic of degree $n$, and let $g$ have degree $n-1$ and positive leading coefficient. 
If $q-tg$ is real-rooted for every $t\in\R$, then $g$ is real-rooted, $g/q$ has only simple poles, and 
\[ \frac{g(x)}{q(x)}=\sum_{\lambda:q(\lambda)=0} \frac{c_\lambda}{x-\lambda}, \qquad c_\lambda\geq0, \] 
where the sum is over distinct roots and $c_\lambda=0$ whenever $g/q$ is analytic at $\lambda$. 
\end{lemma}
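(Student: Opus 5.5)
The plan is the classical Obreschkoff argument: turn real-rootedness of the whole pencil into interlacing of $g$ with $q$, and then read the signs of the residues off the interlacing. The steps are ordered so that each one uses only the previous ones.

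First I would reduce to the case of no common roots. Let $h=\gcd(q,g)$, taken monic, and write $q=hq_0$ and $g=hg_0$. Then $q-tg=h(q_0-tg_0)$. Because $q-tg$ is real-rooted for every real $t$, so is $h$, and so is $q_0-tg_0$ for every $t$. We also have $g/q=g_0/q_0$, so any root of $q$ that is not a pole of $g/q$ gets $c_\lambda=0$. Here $q_0$ is monic of degree $m$, $g_0$ has degree $m-1$ with positive leading coefficient, and $q_0,g_0$ are coprime. If $m=0$ then $g=0$, which contradicts the degree assumption, so $m\geq1$.

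Next I would show that the roots of $q_0$ are simple and strictly interlace those of $g_0$. This is the Hermite--Kakeya--Obreschkoff step. Consider the real rational function $\phi=g_0/q_0$. For each real $t\neq0$, the real roots of $q_0-tg_0$ are exactly the solutions of $\phi(x)=1/t$, using coprimality. The polynomial $q_0-tg_0$ has degree $m$ for all $t$, since $\deg g_0<m$. Real-rootedness for all $t$ then forces every level set $\{\phi=c\}$, $c\neq0$, to contain $m$ real points counted with multiplicity.

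Now suppose $\phi$ had a nonreal pole, or a multiple real pole, or failed to be monotone between consecutive poles. In each case a local perturbation argument produces a value $c$ for which $\phi=c$ has fewer than $m$ real solutions. Concretely: near a nonreal root $z_0$ of $q_0$, $\phi$ takes every large complex value, so $q_0-tg_0$ has a nonreal root near $z_0$ for small $t$. This shows that $q_0$ is real-rooted, which also follows directly from the case $t=0$.

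The main obstacle is to rule out multiple poles and non-monotone behaviour, for which I would use a counting argument. On each of the $m+1$ intervals determined by the distinct poles, $\phi$ must take the value $c$ often enough that the total is $m$ for every $c$. Since $\phi\to0$ at $\pm\infty$, this forces the poles to be simple and $\phi$ to be strictly decreasing between consecutive poles. The cleanest route I would try first is the standard equivalence, cited from \cite{Wag11}: real-rootedness of all real combinations $q_0-tg_0$ is equivalent to $q_0$ and $g_0$ interlacing, with $q_0,g_0$ real-rooted. Combined with coprimality, the interlacing is strict, so the roots of $q_0$ are simple. Then $g=hg_0$ is real-rooted.

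Finally I would compute the residues. With simple roots $\lambda$ of $q_0$ we have
\[
\frac{g_0}{q_0}=\sum_{\lambda}\frac{g_0(\lambda)}{q_0'(\lambda)}\,\frac{1}{x-\lambda}.
\]
By strict interlacing, $g_0$ has exactly one root between consecutive roots of $q_0$, so $g_0(\lambda)$ and $q_0'(\lambda)$ alternate in sign together as $\lambda$ runs through the roots. At the largest root, $q_0'(\lambda)>0$ because $q_0$ is monic. Also $g_0(\lambda)>0$, since no root of $g_0$ exceeds the largest root of $q_0$ and the leading coefficient of $g_0$ is positive. Hence every residue $c_\lambda=g_0(\lambda)/q_0'(\lambda)$ is positive. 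Roots of $q$ that are not roots of $q_0$ receive $c_\lambda=0$, which completes the decomposition.
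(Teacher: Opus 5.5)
Your proof is correct and is essentially the paper's argument: Hermite--Kakeya--Obreschkoff interlacing, then reduction by $\gcd(q,g)$ to a coprime pair whose interlacing is strict (hence simple poles), then positivity of the residues $g_0(\lambda)/q_0'(\lambda)$ by sign alternation; the only difference is that you divide out the gcd before invoking interlacing rather than after. The one step to state explicitly, as the paper does, is that $g_0$ is real-rooted, since it is the nonzero coefficientwise limit of $g_0-q_0/t$ as $t\to\infty$; this is needed before you apply the standard form of Hermite--Kakeya--Obreschkoff, which requires every real combination, including $g_0$ alone, to be real-rooted.
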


\begin{proof}
The polynomial $g$ is real-rooted, since it is the nonzero coefficientwise limit of the real-rooted polynomials $g-q/t$ as $t\to+\infty$. 
Hence every nonzero real linear combination of $q$ and $g$ is real-rooted. 
By the Hermite--Kakeya--Obreschkoff theorem \cite[Theorem~6.3.8]{RS02}, $g$ interlaces $q$.
If $\lambda$ is a root of $q$ of multiplicity $m$, the interlacing inequalities force $\lambda$ to be a root of $g$ of multiplicity at least $m-1$. 
Therefore, every pole of $g/q$ is simple.

Let $h$ be their monic greatest common divisor, and write $q=hQ$ and $g=hG$. 
Then, $Q$ has simple real roots, strictly interlaced by those of $G$. 
Since $Q$ is monic, $G$ has positive leading coefficient, and $\deg G=\deg Q-1$, partial fractions give
\[
  \frac{g(x)}{q(x)}
  =\frac{G(x)}{Q(x)}
  =\sum_{\lambda:Q(\lambda)=0}
      \frac{G(\lambda)}{Q'(\lambda)}\,\frac1{x-\lambda}.
\]
Strict interlacing implies that $G(\lambda)$ and $Q'(\lambda)$ have the same sign at every root of $Q$, so each displayed residue is positive. 
Assigning residue zero at roots of $q$ where the quotient is analytic proves the claim.
\end{proof}

\subsection{Signings with small mixed roots}
\label{sec:signing-input}

The graph signing bounds use the following selection result from~\cite{RS21}, which crucially uses the interlacing family technique.
%Again, we include a proof sketch for completeness.

\begin{lemma}[Ravichandran--Srivastava mixed-root bound~\cite{RS21}]
\label{lem:mixed-root-signing}
Let $G$ have maximum degree at most $d$, where $d\geq2$. 
There exists a signing $\sigma$ such that $r_{A_\sigma}^2\leq2(d-1)$.
\end{lemma}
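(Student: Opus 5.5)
The plan is to follow the Ravichandran--Srivastava route: an interlacing family over signings, whose average polynomial is a rescaled matching polynomial, combined with the Heilmann--Lieb root bound.

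\textbf{Step 1: the average over signings.} For a fixed graph $G$, expand $q_{A_\sigma}=\chi[A_\sigma,-A_\sigma]$ using \eqref{eq:mixed-determinantal-polynomial}. It is a sum over ordered bipartitions $S_1\sqcup S_2=[n]$ of $\det(xI-A_\sigma[S_1])\det(xI+A_\sigma[S_2])$. Each determinant expands as a sum over permutations, i.e.\ over disjoint unions of cycles in the induced subgraphs. When we average over uniformly random independent edge signs $\sigma$, any term in which some edge is used an odd number of times vanishes. Since the two factors involve disjoint vertex sets, the only surviving terms are those in which every cycle is a $2$-cycle, that is, a matching. A matching edge in $G[S_j]$ contributes $-\sigma_e^2=-1$ from either factor. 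Hence
\[
\E_\sigma q_{A_\sigma}(x)=2^{-n}\sum_{S_1\sqcup S_2}\mu_{G[S_1]}(x)\,\mu_{G[S_2]}(x),
\]
where $\mu$ denotes the matching polynomial. One then identifies this sum with a rescaled matching polynomial of $G$, such as $\mu_G(\sqrt2\,x)/2^{n/2}$ or a matching polynomial of a closely related graph. Alternatively, one bounds it directly. A matching $M$ of $G$ contributes through the partitions in which each edge of $M$ lies inside one part, with probability $2^{-|M|}$. This gives
\[
\E_\sigma q_{A_\sigma}(x)=\sum_M (-1)^{|M|}2^{-|M|}x^{n-2|M|}=2^{-n/2}\mu_G(\sqrt2\,x).
\]
By Heilmann--Lieb, the roots of $\mu_G$ are at most $2\sqrt{d-1}$ in absolute value. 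The roots of the average are therefore at most $2\sqrt{d-1}/\sqrt2=\sqrt{2(d-1)}$.

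\textbf{Step 2: interlacing family.} Order the edges and consider the partial averages $q_{s}$, obtained by fixing the signs of the first $m$ edges and averaging over the rest. We must show that, for each prefix, the two children $q_{s,+}$ and $q_{s,-}$ have a common interlacer. Equivalently, every convex combination $\alpha q_{s,+}+(1-\alpha)q_{s,-}$ must be real-rooted. Once this holds, the standard interlacing-family argument of Marcus--Spielman--Srivastava produces a leaf $\sigma$ with
\[
r_{A_\sigma}=\maxroot q_{A_\sigma}\le\maxroot \E_\sigma q_{A_\sigma}\le\sqrt{2(d-1)}.
\]

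\textbf{Main obstacle: real-rootedness of mixtures.} The plan is to exploit multilinearity. For an edge $e=\{u,v\}$, changing $\sigma_e$ changes $A_\sigma$ by a rank-two matrix. Replacing $\sigma_e$ by independent random signs with arbitrary biases yields a polynomial that is an expectation over random rank-one updates: write $\sigma_e(e_ue_v^T+e_ve_u^T)$ as $\pm(e_u\pm e_v)(e_u\pm e_v)^T/2$ minus diagonal terms. The diagonal terms cancel in the mixed-determinant expansion in the same way they do for zero-diagonal matrices, though this needs care. Real-rootedness of such mixtures should then follow from the real-stability of $\det(Z_k-H)$ and the differential formula in \Cref{lem:transversal-compressions}, applied to $H=\diag(A,-A)$ with independent rank-one perturbations. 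Averages of real-stable determinantal polynomials over independent rank-one updates remain real-stable. This is the mixed-characteristic-polynomial argument, and the operators $\partial_{z_i}^{k-1}$ preserve stability.

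I expect this step, namely verifying that the edge-sign mixtures are realized by independent rank-one perturbations compatible with the block-diagonal form and the transversal operator, to be the main difficulty. If it resists, I would fall back on simply citing \cite[Section~7]{RS21}, where exactly this interlacing property is established.
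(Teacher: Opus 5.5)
Your overall route is the one the paper relies on. The paper gives no proof of its own and imports the lemma from \cite{RS21}: an interlacing family over edge signings, a uniform average that is a rescaled matching polynomial, and Heilmann--Lieb. Your Step~1 computation is correct. Averaging kills every cycle of length at least three, and each matching $M$ survives with weight $(-1)^{|M|}2^{-|M|}$. Hence $\E_\sigma q_{A_\sigma}(x)=2^{-n/2}\mu_G(\sqrt2\,x)$, with roots in $(-\sqrt{2(d-1)},\sqrt{2(d-1)})$.

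\textbf{The gap.} The problem is your sketch of the interlacing property. You apply the Marcus--Spielman--Srivastava principle to $H=\diag(A_\sigma,-A_\sigma)$ ``with independent rank-one perturbations.'' These perturbations are not independent. The same $\sigma_e$ drives $e_u+\sigma_e e_v$ in the first block and $e_u-\sigma_e e_v$ in the second. So $\E_\sigma\det(Z_2-H_\sigma)=\E_\sigma[\det(Z-A_\sigma)\det(Z+A_\sigma)]$ is not an average over independent updates.

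Independence is essential to that principle. Take the perfectly correlated pair $\sqrt2\,\xi e_1,\sqrt2\,\xi e_2$ with $\xi$ uniform on $\{0,1\}$. Its expected characteristic polynomial is $\tfrac12[(x-2)^2+x^2]$, which has nonreal roots.

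Two smaller points. The diagonal corrections need no cancellation: they are deterministic and go into the constant Hermitian part. Also, $(e_u+\sigma_e e_v)(e_u+\sigma_e e_v)^\top-e_ue_u^\top-e_ve_v^\top=\sigma_e(e_ue_v^\top+e_ve_u^\top)$, with no factor $1/2$.

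\textbf{The fix: decoupling.} Consider a summand of \eqref{eq:mixed-determinantal-polynomial} with $k=2$. The factor $\det(xI-A_\sigma[S_1])$ depends only on the signs of edges inside $S_1$. The factor $\det(xI+A_\sigma[S_2])$ depends only on the signs of edges inside $S_2$, a disjoint set. Fix any product distribution on signs, and let $\sigma'$ be an independent copy with the same biases. Then, by \Cref{lem:transversal-compressions},
\[
\E_\sigma\chi[A_\sigma,-A_\sigma](x)=\E_{\sigma,\sigma'}\chi[A_\sigma,-A_{\sigma'}](x)=\frac1{2^n}\prod_{i=1}^n\partial_{z_i}\Bigl[\E_\sigma\det(Z-A_\sigma)\cdot\E_{\sigma'}\det(Z+A_{\sigma'})\Bigr]\Big|_{z_1=\cdots=z_n=x}.
\]

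Each factor is real stable in $z$ by the mixed characteristic polynomial argument. Write $\pm A_\sigma=\sum_e v_e^{\pm}(v_e^{\pm})^\top-D$, where $v_e^\pm=e_u\pm\sigma_e e_v$ are independent across edges and $D$ is the deterministic degree matrix. Products and $\partial_{z_i}$ preserve real stability, and the diagonal restriction is real-rooted.

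Now use biases $0/1$ on the fixed edges, $\alpha$ on the branching edge, and $1/2$ on the rest. This gives real-rootedness of every convex combination $\alpha q_{s,+}+(1-\alpha)q_{s,-}$, which is what the interlacing-family argument needs. With this step, or with the citation to \cite{RS21} that the paper uses, your proof is complete.
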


\section{Positive matrix measures and spectral recovery}
\label{sec:measure}

In this section, we prove the existence of the positive matrix-valued measure on the mixed roots for each input matrix as mentioned in \Cref{sec:overview}.
The following is the main theorem in this section, which also gives the mean, variance, and the relationship between first two moments and the variance of the matrix-valued measure.

\begin{theorem}[Positive matrix-valued measure and exact variance]
\label{thm:matrix-measure}
Let $A_1,\ldots,A_k$ be zero-diagonal Hermitian $n\times n$ matrices, and let $\Lambda$ be the set of distinct roots of $q=\chi[A_1,\ldots,A_k]$. 
For each $j\in[k]$, there are Hermitian
matrices $\{W_\lambda^{(j)}\succeq0$, $\lambda\in\Lambda\}$, satisfying
\begin{align}
  \sum_\lambda W_\lambda^{(j)}=I, \quad
  \sum_\lambda\lambda W_\lambda^{(j)}=\frac{A_j}{k}, \quad \text{and}~~
  \sum_\lambda\lambda^2W_\lambda^{(j)}=\frac{A_j^2+\sum_{i\ne j}\diag(A_i^2)}{k^2}.
  \label{eq:tuple-moments}
\end{align}
Furthermore, for $H_m^{(j)} :=\sum_\lambda\lambda^mW_\lambda^{(j)}$, it holds that
\begin{equation}
  H_2^{(j)}-(H_1^{(j)})^2
  =\frac1{k^2}\sum_{i\ne j}\diag(A_i^2).
  \label{eq:tuple-variance}
\end{equation}
For real symmetric inputs, the matrices $W_\lambda^{(j)}$ may be chosen real symmetric.
\end{theorem}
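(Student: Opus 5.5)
We build, for each fixed $j$, a matrix-valued resolvent whose scalar compressions are rational functions of the form treated by \Cref{lem:residue-decomposition}, and then read the moments off its expansion at infinity.

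\textbf{Step 1: the resolvent.} Set $H=\diag(A_1,\ldots,A_k)$, so that $q=\psi_H$ by \eqref{eq:mixed-psi}. For $v\in\C^n$ and $t\in\R$, consider the rank-one perturbation of the $j$th block,
\[
  q_t=\chi[A_1,\ldots,A_j+tvv^*,\ldots,A_k].
\]
This polynomial is real-rooted for every $t$, by \Cref{lem:transversal-compressions}. Expanding the determinants by multilinearity in the rank-one term gives
\[
  q_t=q-t\,g_v,\qquad g_v(x)=k^{-n}\sum_{S_1\sqcup\cdots\sqcup S_k}
  \Bigl(\prod_{i\neq j}\det(xI-A_i[S_i])\Bigr)\,
  v[S_j]^*\adj(xI-A_j[S_j])\,v[S_j].
\]
So $g_v(x)=v^*G_j(x)v$ for a polynomial matrix $G_j$ of degree $n-1$ in $x$.

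Its leading coefficient is $k^{-n}\sum_{S_j}\sum_{i\in S_j}|v_i|^2\cdot(\text{number of partitions of the rest})$. Each index $i$ lies in $S_j$ in exactly $k^{n-1}$ partitions, so the leading coefficient is $\|v\|^2/k$, which is positive for $v\neq 0$.

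Applying \Cref{lem:residue-decomposition} to $q$ and $g_v$ for each $v$ gives $v^*G_j(x)v/q(x)=\sum_\lambda c_\lambda(v)/(x-\lambda)$ with $c_\lambda(v)\ge0$. By polarization, $F_j:=G_j/q$ then has only simple poles at the roots of $q$, with Hermitian residues $R_\lambda$ satisfying $v^*R_\lambda v=c_\lambda(v)\ge0$. Define $W_\lambda^{(j)}:=kR_\lambda\succeq0$.

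For real symmetric inputs, $G_j$ has real coefficients. Hence the residues are real symmetric, which settles the last sentence of the theorem.

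\textbf{Step 2: moments from the expansion at infinity.} Expanding the partial fractions gives
\[
  kF_j(x)=\sum_{m\ge0}x^{-m-1}H_m^{(j)}.
\]
So the moments in \eqref{eq:tuple-moments} are the first three Laurent coefficients of $kG_j/q$. Writing $kG_j=\sum_m x^{n-1-m}B_m$ and $q=\sum_m x^{n-m}e_m$ (with $e_0=1$), the moments are determined by
\[
  B_0=H_0,\qquad B_1=H_1+e_1H_0,\qquad B_2=H_2+e_1H_1+e_2H_0.
\]
Because all $A_i$ have zero diagonal, $e_1=0$. Also $e_2=-k^{-n}\sum(\text{sum of }|a_{ab}|^2\text{ over pairs in a common part})$, and each pair $\{a,b\}$ lands in part $i$ in $k^{n-2}$ partitions. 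Hence
\[
  e_2=-k^{-2}\sum_i\sum_{a<b}|(A_i)_{ab}|^2 .
\]

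For the coefficients of $kG_j$, use $\adj(xI-M)=x^{s-1}I+x^{s-2}M+x^{s-3}(M^2-e_2(M)\,I)+\cdots$ for an $s\times s$ zero-diagonal $M$, where the $M^2$ term arises as $M^2+e_1(M)M+e_2(M)I$ with $e_1(M)=0$. Averaging over partitions then yields the following:
\begin{itemize}
  \item $B_0=I$, since each $i$ lies in $S_j$ with weight $k^{-1}$, and the factor $k$ cancels this.
  \item $B_1=A_j/k$, since an entry $(a,b)$ survives only when $a,b\in S_j$, which has weight $k^{-2}$, multiplied by the factor $k$.
  \item $B_2$ is the entry-by-entry average of $(A_j[S_j])^2$, together with the scalar contributions from the second coefficients of the other parts and of $A_j[S_j]$, restricted to indices in $S_j$.
\end{itemize}

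\textbf{Step 3: the second moment, the main obstacle.} The bookkeeping in $B_2$ is where care is needed. The diagonal entry $((A_j[S_j])^2)_{aa}$ includes only neighbours $b\in S_j$, with conditional weight $1/k$. The off-diagonal $(a,c)$ entry needs $a,b,c\in S_j$. The $e_2$-type terms, for pairs $\{b,b'\}$ inside some part $S_i$ while $a\in S_j$, contribute with weight depending on whether the pair involves $a$.

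The plan is to organize the computation as $B_2-e_2H_0$ and verify that it equals
\[
  \frac{A_j^2+\sum_{i\ne j}\diag(A_i^2)}{k^2}.
\]
In this verification, all pairs not touching $a$ cancel against $-e_2I$, since those pairs are independent of the event $a\in S_j$. The remaining pairs $\{a,b\}$ with $b$ in a part $i\ne j$ produce exactly $k^{-2}|(A_i)_{ab}|^2$, while the pairs in part $j$ produce the $A_j^2$ diagonal. I expect sign errors here to be the chief risk.

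Finally, \eqref{eq:tuple-variance} follows immediately from $H_1^{(j)}=A_j/k$ by subtracting $(H_1^{(j)})^2=A_j^2/k^2$ from $H_2^{(j)}$.
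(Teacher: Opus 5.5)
Your proposal is correct, and it reaches the same resolvent by a more direct route than the paper.

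\textbf{Construction.} The paper first builds the Fourier coupling $M_\eta$ and proves \Cref{lem:pencil}. That step uses unitary invariance of $\psi_H$ under the block Fourier matrix to show $\chi[A_1+ktvv^*,A_2,\ldots,A_k]=\E_\eta\det(xI-M_\eta-tvv^*)$, and then sets $G=\E_\eta\adj(xI-M_\eta)$. You instead apply the matrix determinant lemma term by term inside the partition sum. So beyond \Cref{lem:residue-decomposition} you only need real-rootedness of $\chi$ for arbitrary Hermitian inputs. The Fourier coupling and the invariance step disappear, and the real symmetric case is immediate because $G_j$ visibly has real coefficients.

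The two constructions give the same object. Both $kG_j$ and the paper's $G$ satisfy $q-t\,v^*(\cdot)v=\chi[A_j+ktvv^*,\ldots]$ for all $v,t$, so $kG_j=G$ by polarization.

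\textbf{What the paper's route buys.} Since $q$ is an average of characteristic polynomials of a single random zero-trace matrix, the $\tr M_\eta^2$ corrections cancel automatically. Then $H_2=\E M_\eta^2$ follows in a few lines from independence of the phases. The same random-cut representation is also what makes the fourth-moment formula of \Cref{lem:fourth-moment-matrix} and the $K_8$ computation tractable. In your partition bookkeeping these cancellations must be done by hand, which would become heavy at fourth order.

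\textbf{Your Step 3 bookkeeping.} It does go through. Fix $a$ and sum over partitions with $a\in S_j$, using the prefactor $k\cdot k^{-n}$.
\begin{itemize}
\item The term $(A_j[S_j]^2)_{aa}$ contributes $k^{-1}(A_j^2)_{aa}$.
\item This is exactly cancelled by the pairs $\{a,b\}\subseteq S_j$ inside $e_2(A_j[S_j])$, which carry weight $-k^{-1}|(A_j)_{ab}|^2$.
\item Pairs avoiding $a$ contribute $-k^{-2}|(A_i)_{bc}|^2$ each and cancel against $-e_2I$.
\item What remains of $-e_2I$ is $k^{-2}\sum_{i=1}^k(A_i^2)_{aa}$, including $i=j$.
\item Off-diagonal entries give $k\cdot k^{-n}\cdot k^{n-3}(A_j^2)_{ac}=k^{-2}(A_j^2)_{ac}$.
\end{itemize}
So the diagonal of $A_j^2/k^2$ actually comes from $-e_2I$, after the order-$k^{-1}$ terms cancel, rather than from $(A_j[S_j])^2$.

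\textbf{A sign slip.} With $e_2(M)$ the $x^{s-2}$ coefficient of $\det(xI-M)$, the third adjugate coefficient is $M^2+e_2(M)I=M^2-\tfrac12\tr(M^2)I$, not $M^2-e_2(M)I$. Your second formulation of that sentence has the correct sign.
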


Before proving the theorem, we first introduce a representation of the mixed determinantal polynomials that is preserved under rank-one perturbation.
The representation is based on the following Fourier coupling.
Let $\eta_1,\ldots,\eta_n$ be independent and uniform on $\{0,\ldots,k-1\}$, let $\omega=\exp(2\pi\mathrm{i}/k)$ and define
\begin{equation}
  D_\eta=\diag(\omega^{\eta_1},\ldots,\omega^{\eta_n}),\qquad
  M_\eta=\frac1k\sum_{a=0}^{k-1}D_\eta^a A_{a+1}D_\eta^{-a}.
  \label{eq:phase-matrix}
\end{equation}
Note that all summands use powers of the same diagonal phase matrix.

\begin{lemma}[Rank-one perturbation preserving representation] \label{lem:pencil}
For any Hermitian tuple $(A_1, \ldots, A_k)$, for any $v\in\C^n$ and $t\in\R$, we have
\begin{align}
  q(x)= \chi[A_1, \ldots, A_k](x) &=\E_\eta\det(xI-M_\eta),
  \label{eq:phase-characteristic}\\
  \chi[A_1+ktvv^*,A_2,\ldots,A_k](x)
  &=\E_\eta\det(xI-M_\eta-tvv^*).
  \label{eq:phase-rank-one}
\end{align}
In particular, \eqref{eq:phase-rank-one} is real-rooted for all $t \in \R$.
\end{lemma}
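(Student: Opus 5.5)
The plan is to realize $q$ as a transversal polynomial $\psi_H$ for a unitarily rotated $H$, using the differential formula and invariance in \Cref{lem:transversal-compressions}. Start from \eqref{eq:mixed-psi}, $q=\psi_{\diag(A_1,\ldots,A_k)}$, and group coordinates so that $\C^{kn}=\C^k\otimes\C^n$, with $H_0=\sum_a E_{aa}\otimes A_{a+1}$. Let $F$ be the $k\times k$ discrete Fourier matrix, $F_{ab}=\omega^{ab}/\sqrt k$, and set $U=F\otimes I_n$. Then $U$ commutes with $Z_k=I_k\otimes Z$ for every $Z$, so \Cref{lem:transversal-compressions} gives $\psi_{U^*H_0U}=\psi_{H_0}=q$.

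Next compute the compressions of $H=U^*H_0U$. A transversal $T$ is a choice $\eta_i\in\{0,\ldots,k-1\}$ for each $i\in[n]$. The $(i,j)$ entry of $H[T]$ is
\[
\sum_a \overline{F_{a\eta_i}}F_{a\eta_j}(A_{a+1})_{ij}=\frac1k\sum_a\omega^{-a\eta_i}\omega^{a\eta_j}(A_{a+1})_{ij}.
\]
This is the $(i,j)$ entry of $\frac1k\sum_a D_\eta^{-a}A_{a+1}D_\eta^{a}$. That matches $M_\eta$ up to replacing $\eta$ by $-\eta$ (equivalently, using $F$ in place of $F^*$). The map $\eta\mapsto-\eta \bmod k$ preserves the uniform distribution, so it does not affect the average. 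Averaging over the $k^n$ transversals with weight $k^{-n}$ is exactly $\E_\eta$, which proves \eqref{eq:phase-characteristic}.

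For \eqref{eq:phase-rank-one}, run the same computation with $A_1$ replaced by $A_1+ktvv^*$. The extra term in $M_\eta$ is $\frac1k D_\eta^0(ktvv^*)D_\eta^0=tvv^*$, because the $a=0$ summand carries no phase. This is the reason for putting the perturbation in the first slot. Linearity of $M_\eta$ in the tuple therefore gives $\E_\eta\det(xI-M_\eta-tvv^*)$.

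Real-rootedness then follows by applying \eqref{eq:mixed-psi} together with \Cref{lem:transversal-compressions} to the Hermitian tuple $(A_1+ktvv^*,A_2,\ldots,A_k)$, since $t$ is real. The main obstacle is only the bookkeeping in the compression step: ordering coordinates by $i$ inside a transversal, and tracking the sign convention of the phases. The conjugation direction is harmless because of the $\eta\mapsto-\eta$ symmetry.
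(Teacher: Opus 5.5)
Your proposal is correct and follows essentially the same route as the paper's proof: the block Fourier unitary $F\otimes I_n$ commutes with $I_k\otimes Z$, and the transversal compressions of the rotated block-diagonal matrix are identified with $M_\eta$. The $a=0$ slot then carries the perturbation as $tvv^*$ with no phase. The only difference is that the paper uses the opposite sign convention $\omega^{-ab}$, so it gets $M_\eta$ directly and does not need your $\eta\mapsto-\eta$ relabeling.
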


\begin{proof}
To prove these identities, index the coordinates of $\C^{kn}$ by $(a,i)$, where $0\le a<k$ and $i\in[n]$. Let $U$ be the block Fourier matrix with $n\times n$ blocks $U_{ab}=k^{-1/2}\omega^{-ab}I$, for $0\leq a,b<k$.
Let $H_t=\diag(A_1+ktvv^*,A_2,\ldots,A_k)$, block multiplication gives that
\[
  (U^*H_tU)_{bc}
  =\frac1k\sum_{a=0}^{k-1}\omega^{a(b-c)}A_{a+1}+tvv^*.
\]
Given $\eta$, then $T = \{(\eta_i, i)\}_{i \in [n]}$ forms a transversal, and $T$ indexed principal submatrix is
\[
(U^* H_t U)[T] = \left( \frac{1}{k} \sum_{a=0}^{k-1} \omega^{a(\eta_i - \eta_j)} (A_{a+1})_{ij} + t v_i \overline{v_j} \right)_{i,j=1}^n = M_{\eta} + t v v^*.
\]
Thus, according to the definition of $\psi_H$, it holds that
\begin{align*}
\psi_{U^* H_t U}(x) & = \frac{1}{k^n} \sum_{T\text{ transversal}}\det(xI- (U^*H_tU)[T]) \\
& = \frac{1}{k^n} \sum_{\eta} \det(xI - M_{\eta} - t v v^* ) = \E_{\eta} \det(xI - M_{\eta} - t v v^*).
\end{align*}
Since $U$ is unitary and commutes with $I_k \otimes \diag(z_1, \ldots, z_n)$, \Cref{lem:transversal-compressions} implies that $\psi_{H_t} = \psi_{U^* H_t U}$.
Combining with \eqref{eq:mixed-psi} proves \eqref{eq:phase-rank-one}, and taking $t=0$ gives \eqref{eq:phase-characteristic}.
Real-rootedness also follows from \Cref{lem:transversal-compressions}.
\end{proof}

Now, we are ready to prove the main result in this section.

\begin{proof}[Proof of \Cref{thm:matrix-measure}]
It suffices to prove the theorem for $j=1$, as all other choices follow by reordering the tuple. 

In order to construct the positive matrix-valued measure, a key step is to introduce the following matrix-valued rational function.
Define
\begin{equation}
  F(x):=\frac{G(x)}{q(x)}, \qquad \text{where} \quad G(x):=\E_\eta\adj(xI-M_\eta).
  \label{eq:tuple-quotient}
\end{equation}

For any nonzero $v \in \C^n$, we consider the polynomial $g_v(x)=v^*G(x)v$, which has degree $n-1$ and leading coefficient $\norm v_2^2 > 0$. Since
\[
  q(x)-tg_v(x) = \E_{\eta}[\det(xI - M_{\eta}) - t v^* \adj(x I - M_{\eta})v],
\]
the matrix determinant lemma and \eqref{eq:phase-rank-one} imply that
\[
q(x)-tg_v(x) = \E_{\eta} \det(xI - M_{\eta} - t vv^*) = \chi[A_1+ktvv^*,A_2,\ldots,A_k](x).
\]
\Cref{lem:pencil} shows that $q(x)-tg_v(x)$ is real-rooted for all $t \in \R$.
Thus, it follows from \Cref{lem:residue-decomposition} that $g_v$ is real-rooted and $g_v/q$ has only simple poles, and admits a nonnegative residue decomposition
\[ 
\frac{g_v(x)}{q(x)}=\sum_{\lambda \in \Lambda} \frac{c_{\lambda,v}}{x-\lambda}, \qquad c_{\lambda,v}\geq0, \] 
where $c_{\lambda,v}=0$ whenever $g_v/q$ is analytic at $\lambda$.

Since $q$ has real coefficients and $G$ has Hermitian coefficients, $F(\overline z)^*=F(z)$ whenever $q(z) \neq 0$. 
Thus, the Laurent coefficients of $F$ at each $\lambda\in\Lambda$ are Hermitian. 
For every $s\geq2$, let $B_{\lambda,s}$ be the coefficient of $(x-\lambda)^{-s}$. The scalar conclusion gives $v^*B_{\lambda,s}v=0$ for every $v$, and hence $B_{\lambda,s}=0$.
Thus, $F$ itself has only simple poles. Define
\[
  W_\lambda^{(1)}:=\lim_{x\to\lambda}(x-\lambda)F(x),
\]
assigning the zero matrix when the singularity is removable.
The nonnegative scalar residues give $v^*W_\lambda^{(1)}v\geq0$ for every $v$, so $W_\lambda^{(1)}\succeq0$.
Since $\deg G<\deg q$, there is no polynomial part, and therefore
\begin{equation}
  F(x)=\sum_{\lambda\in\Lambda}
       \frac{W_\lambda^{(1)}}{x-\lambda},
  \qquad W_\lambda^{(1)}\succeq0.
  \label{eq:tuple-residues}
\end{equation}
If all input matrices are real, the substitution $\eta\mapsto-\eta$ conjugates $M_\eta$ and preserves its distribution. Hence $G$ and all residue matrices are real symmetric.

Now, expanding \eqref{eq:tuple-residues} at $x = +\infty$ yields
\begin{align} \label{eq:residue-expansion}
  F(x)
  =\frac{\sum_{\lambda\in\Lambda} W_\lambda^{(1)}}{x}
   +\frac{\sum_{\lambda\in\Lambda}\lambda W_\lambda^{(1)}}{x^2}
   +\frac{\sum_{\lambda\in\Lambda}\lambda^2W_\lambda^{(1)}}{x^3}
   +O(x^{-4}).
\end{align}

To establish \eqref{eq:tuple-moments}, we expand $F(x)$ at $x = +\infty$ again using the Fourier coupling representation in \eqref{eq:phase-matrix}.
Since each $M_\eta$ has zero diagonal and hence zero trace, it holds that
\begin{align*}
  \det(xI-M_\eta)
  &=x^n\left(1-\frac{\tr M_\eta^2}{2x^2}+O(x^{-3})\right),~~ \text{and} \\
  \adj(xI-M_\eta) & = \det(xI - M_{\eta}) (xI - M_{\eta})^{-1} \\
  &=x^{n-1}\left(I+\frac{M_\eta}{x}
    +\frac{M_\eta^2-\tfrac12(\tr M_\eta^2)I}{x^2}
    +O(x^{-3})\right).
\end{align*}
Taking expectation over $\eta$ gives
\begin{align*}
	q(x)^{-1} & = (\E_{\eta} \det(xI - M_{\eta}))^{-1} = x^{-n} \left(1 + \frac{\E \tr M_\eta^2}{2x^2} + O(x^{-3})\right), ~~\text{and} \\
	G(x) & = \E_{\eta} \adj(xI - M_{\eta}) = x^{n-1}\left(I+\frac{\E M_\eta}{x}
    +\frac{\E M_\eta^2- \tfrac12(\E \tr M_\eta^2)I}{x^2}
    +O(x^{-3})\right).
\end{align*}
Therefore, we obtain another expansion of $F(x)$ at $x = +\infty$ that
\begin{equation}
  F(x) = q(x)^{-1} G(x) =\frac Ix+\frac{\E M_\eta}{x^2}
                 +\frac{\E M_\eta^2}{x^3}+O(x^{-4}).
  \label{eq:tuple-series}
\end{equation}
By comparing the coefficients in \eqref{eq:residue-expansion} and \eqref{eq:tuple-series}, we obtain that
\begin{equation*}
\sum_{\lambda\in\Lambda} W_\lambda^{(1)} = I, \quad \sum_{\lambda\in\Lambda} \lambda W_\lambda^{(1)} = \E M_{\eta}, \quad \text{and}~~ \sum_{\lambda\in\Lambda} \lambda^2 W_{\lambda}^{(1)} = \E M_{\eta}^2.
\end{equation*}

It remains to conduct some routine calculations on $\E M_{\eta}$ and $\E M_{\eta}^2$ to establish \eqref{eq:tuple-moments}.
According to the definition of $M_{\eta}$, for $i \neq j$, we have
\[
  (M_\eta)_{ij}
  =\frac1k\sum_{a=0}^{k-1}
       \omega^{a(\eta_i-\eta_j)}(A_{a+1})_{ij}.
\]
Independence and uniformity of $\eta$ give
$\E\omega^{a(\eta_i-\eta_j)} =\bigl(\E\omega^{a\eta_i}\bigr) \bigl(\E\omega^{-a\eta_j}\bigr)$, which is $1$ when $a=0$ and $0$ otherwise.
Thus, only the term $a=0$ survives on the off-diagonal entries.
Since all input matrices have zero diagonal,
\[
  \E M_\eta=\frac{A_1}{k}.
\]

For the second moment, expand the matrix product entrywise:
\[
  (M_\eta^2)_{ij}
  =\frac1{k^2}\sum_{h=1}^n\sum_{a,b=0}^{k-1}
     \omega^{a\eta_i+(b-a)\eta_h-b\eta_j}
     (A_{a+1})_{ih}(A_{b+1})_{hj}.
\]
First consider the case of $i\ne j$. 
A term with $h=i$ or $h=j$ vanishes as input matrices have zero diagonal.
For every remaining term, the three indices $i,h,j$ are distinct.
The coefficient expectation factors as
\[
  \E\omega^{a\eta_i+(b-a)\eta_h-b\eta_j}
  =\bigl(\E\omega^{a\eta_i}\bigr)
   \bigl(\E\omega^{(b-a)\eta_h}\bigr)
   \bigl(\E\omega^{-b\eta_j}\bigr).
\]
This product is nonzero exactly when $a=b=0$. Thus
\[
  (\E M_\eta^2)_{ij}
  =\frac1{k^2}\sum_{h=1}^n(A_1)_{ih}(A_1)_{hj}
  =\frac{(A_1^2)_{ij}}{k^2},\qquad i\ne j.
\]
For a diagonal entry with $i=j$, again the term $h=i$ vanishes.
For $h\ne i$, the coefficient reduces to $\omega^{(a-b)(\eta_i-\eta_h)}$. 
Its expectation is $1$ when $a=b$ and $0$ otherwise. Consequently,
\begin{align*}
  (\E M_\eta^2)_{ii} =\frac1{k^2}\sum_{a=0}^{k-1}\sum_{h=1}^n (A_{a+1})_{ih}(A_{a+1})_{hi} =\frac1{k^2}\sum_{a=1}^k(A_a^2)_{ii}.
\end{align*}
Combining the diagonal and off-diagonal formulas yields
\[
  \E M_\eta^2
  =\frac{A_1^2+\sum_{a=2}^k\diag(A_a^2)}{k^2}.
\]
This proves \eqref{eq:tuple-moments} for $j=1$.
\eqref{eq:tuple-variance} follows directly from \eqref{eq:tuple-moments}.
\end{proof}

%The trace identity shows that every distinct mixed root has a nonzero matrix weight. A directional weight $v^*W_\lambda^{(j)}v$ may still vanish. 
%The representation therefore includes multiple roots without requiring positive mass in every direction.

\subsection{Spectral bounds from the moments}

The root interval and the first two moments now give \Cref{thm:tuple-recovery} directly.

\begin{proof}[Proof of \Cref{thm:tuple-recovery}]
Let $q(x) = \chi[A_1, \ldots, A_k]$. By \Cref{thm:matrix-measure}, it holds that
\[
  0\preceq\sum_{\lambda: q(\lambda)=0}(\lambda-\ell)(r-\lambda)W_\lambda^{(j)}
  =-H_2^{(j)}+(\ell+r)H_1^{(j)}-\ell rI.
\]
Replacing the first and second moments according to \eqref{eq:tuple-moments}, then rearranging the terms gives \eqref{eq:tuple-interval}, whose symmetric specialization is \eqref{eq:tuple-symmetric}.

For sharpness, take real numbers $a_1,\ldots,a_k$, and let
\[
  A_i=a_i\begin{pmatrix}0&1\\1&0\end{pmatrix}, \quad i = 1, \ldots, k.
\]
Then, it follows from \eqref{eq:phase-characteristic} that
\begin{align*}
q(x) & = \E_{\eta} \det\left(xI - \frac{1}{k} \sum_{i=0}^{k-1} a_{i+1} \begin{pmatrix} & \omega^{i(\eta_1-\eta_2)} \\ \omega^{i(\eta_2-\eta_1)} & \end{pmatrix} \right) \\
& = \E_{\eta} \left( x^2 - \frac{1}{k^2} \sum_{i,j=0}^{k-1} a_{i+1}a_{j+1} \omega^{(i-j)(\eta_1-\eta_2)} \right) \\
& = x^2 - \frac{1}{k^2} \sum_{i,j=0}^{k-1} a_{i+1} a_{j+1} \E \omega^{(i-j)(\eta_1 - \eta_2)}.
\end{align*}
Since $\eta$ is independently and uniformly sampled from $[k]^2$, the expectation term is nonzero only when $i=j$. Therefore,
\[
  q(x)=x^2-\frac{1}{k^2} \sum_{i=1}^k a_i^2, \qquad  R^2=\frac{1}{k^2} \sum_{i=1}^k a_i^2.
\]
For every $j$, the left side of \eqref{eq:tuple-symmetric} is $(\sum_i a_i^2)I=k^2R^2I$, and the equality holds.
\end{proof}

The same measure also gives higher-moment inequalities. 
We record them to make the relation between the second- and fourth-moment arguments more explicit.

\begin{proposition}[Moment inequalities]
\label{prop:moment-transfer}
Let $W_\lambda\succeq0$ with $\sum_\lambda W_\lambda=I$, where $\lambda$ is supported on $[-R,R]$, and put $H_m=\sum_\lambda\lambda^mW_\lambda$.
If a real polynomial $p$ is nonnegative on the support, then $\sum_\lambda p(\lambda)W_\lambda\succeq0$. Moreover, for $m\geq1$, the block matrices satisfy
\begin{equation*}
  [H_{a+b}]_{a,b=0}^{m}\succeq0,\qquad
  [R^2H_{a+b}-H_{a+b+2}]_{a,b=0}^{m-1}\succeq0.
\end{equation*}
In particular, $H_{2s}\preceq R^2H_{2s-2}$ for every $s\geq1$.
\end{proposition}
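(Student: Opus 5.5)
The plan is to reduce every assertion to one positivity principle. If $p$ is a real polynomial with $p(\lambda)\ge0$ at every support point, then each scalar $p(\lambda)$ is nonnegative and each $W_\lambda$ is positive semidefinite. A nonnegative combination of positive semidefinite matrices is positive semidefinite, so $\sum_\lambda p(\lambda)W_\lambda\succeq0$. This already proves the first claim; the support can be any finite subset of $[-R,R]$.

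For the Hankel block matrix, take an arbitrary block vector $c=(c_0,\ldots,c_m)$ with $c_a\in\C^n$. Then
\[
  \sum_{a,b=0}^m c_a^*H_{a+b}c_b=\sum_\lambda\Bigl(\sum_a\lambda^ac_a\Bigr)^*W_\lambda\Bigl(\sum_b\lambda^bc_b\Bigr).
\]
Here we use that $\lambda$ is real, so $\lambda^{a+b}=\overline{\lambda^a}\lambda^b$. Each summand has the form $w_\lambda^*W_\lambda w_\lambda\ge0$ with $w_\lambda=\sum_b\lambda^bc_b$, so the block matrix $[H_{a+b}]_{a,b=0}^m$ is positive semidefinite.

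The localizing matrix is handled the same way. For $c=(c_0,\ldots,c_{m-1})$,
\[
  \sum_{a,b=0}^{m-1}c_a^*\bigl(R^2H_{a+b}-H_{a+b+2}\bigr)c_b=\sum_\lambda(R^2-\lambda^2)\,w_\lambda^*W_\lambda w_\lambda,
\]
with $w_\lambda=\sum_b\lambda^bc_b$. Each term is nonnegative because $|\lambda|\le R$. Hence $[R^2H_{a+b}-H_{a+b+2}]_{a,b=0}^{m-1}\succeq0$.

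For the final inequality, apply the first claim to $p(\lambda)=\lambda^{2s-2}(R^2-\lambda^2)$, which is nonnegative on $[-R,R]$. This gives $R^2H_{2s-2}-H_{2s}\succeq0$, that is, $H_{2s}\preceq R^2H_{2s-2}$. Alternatively, take the $(s-1,s-1)$ diagonal block of the localizing matrix with $m=s$.

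There is no real obstacle; everything follows from positivity of the weights. The only points to state explicitly are that $\lambda$ is real, so $\overline{\lambda^a}=\lambda^a$, and that the Hermitian form expansions are legitimate for complex vectors.
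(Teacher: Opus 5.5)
Your proposal is correct and follows essentially the same route as the paper: positivity of nonnegative combinations of the $W_\lambda$, the Hankel and localizing quadratic-form expansions using that $\lambda$ is real, and $p(\lambda)=\lambda^{2s-2}(R^2-\lambda^2)$ for the final inequality. Your alternative derivation of the last step from the $(s-1,s-1)$ diagonal block of the localizing matrix with $m=s$ is also valid.
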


\begin{proof}
The polynomial assertion is a sum of positive semidefinite matrices.
For a vector $z = (z_0,\ldots,z_m) \in \C^{(m+1)n}$, where $z_0, \ldots, z_m \in \C^n$, the quadratic form of $[H_{a+b}]_{a,b=0}^{m}$ with respect to $z$ is
\[
  z^* \Big( [H_{a+b}]_{a,b=0}^{m} \Big) z = \sum_\lambda\left(\sum_{a=0}^m\lambda^az_a\right)^* W_\lambda\left(\sum_{a=0}^m\lambda^a z_a\right)\geq 0,
\]
since $W_\lambda \succeq 0$.
Similarly, the quadratic form of $[R^2H_{a+b}-H_{a+b+2}]_{a,b=0}^{m-1}$ with respect to $z' = (z_0, \ldots, z_{m-1}) \in \C^{mn}$ is
\[
(z')^* \Big( [R^2H_{a+b}-H_{a+b+2}]_{a,b=0}^{m-1} \Big) z' = \sum_{\lambda} (R^2 - \lambda^2) \left(\sum_{a=0}^{m-1} \lambda^az_a\right)^* W_\lambda \left(\sum_{a=0}^{m-1} \lambda^az_a\right) \geq 0.
\]
The last assertion follows by taking $p(\lambda)=\lambda^{2s-2}(R^2-\lambda^2)$ in $\sum_\lambda p(\lambda)W_\lambda\succeq0$.
\end{proof}

\subsection{The pair \texorpdfstring{$(A,-A)$}{(A,-A)}}

For the pair $(A,-A)$, the Fourier average in \eqref{eq:phase-matrix} becomes the random-cut matrix
\begin{equation*}
  M_\varepsilon:=\frac{A-D_\varepsilon AD_\varepsilon}{2}, \quad \text{where}~~D_\varepsilon:=\diag(\varepsilon_1,\ldots,\varepsilon_n)
%  \label{eq:cut-matrix}
\end{equation*}
and the random vertex signs $\eps_1, \ldots, \eps_n$ are independent and uniform over $\{-1,+1\}$.
Thus, $(M_\varepsilon)_{ij}=a_{ij} \mathbf1_{\{\varepsilon_i\ne\varepsilon_j\}}$, and $M_\varepsilon$ is bipartite. 
The construction gives
\begin{equation}
  q_A(x)=\E_\varepsilon\det(xI-M_\varepsilon).
  \label{eq:random-cut-characteristic}
\end{equation}
The matrix-valued rational function in \eqref{eq:tuple-quotient} becomes
\begin{equation}
  F_A(x)=\frac{G_A(x)}{q_A(x)}
         =\sum_{\lambda:q_A(\lambda)=0}\frac{W_\lambda}{x-\lambda}, \qquad \text{where} \quad G_A(x)=\E_\varepsilon\adj(xI-M_\varepsilon).
  \label{eq:averaged-resolvent}
\end{equation}
Expanding $F_A(x)$ at $x = +\infty$ gives
\begin{equation*}
	F_A(x)=\sum_{m\geq0}\frac{\Hmom_m}{x^{m+1}}\quad(|x|>r_A), \quad \text{where}~~ \Hmom_m=\sum_{\lambda: q_A(\lambda)=0} \lambda^mW_\lambda
%  \label{eq:matrix-moments}
\end{equation*}

The following corollary is a simple consequence of applying \Cref{thm:matrix-measure}, \Cref{thm:tuple-recovery}, and \Cref{prop:moment-transfer} to $(A,-A)$.
\begin{corollary}[Recovery for a signed pair]
\label{cor:pair-recovery}
For every real symmetric zero-diagonal matrix $A$, the matrix coefficients in \eqref{eq:averaged-resolvent} are positive semidefinite and supported on $[-r_A,r_A]$. 
Their first moments are
\begin{equation*}
  \Hmom_0=I,\qquad \Hmom_1=\frac A2,\qquad
  \Hmom_2=\frac{A^2+\Delta_A}{4}.
\end{equation*}
Consequently,
\begin{equation}
  A^2+\Delta_A\preceq4r_A^2I,\qquad
  \Hmom_{2s}\preceq r_A^2\Hmom_{2s-2}\quad(s\geq1).
  \label{eq:matrix-resolvent}
\end{equation}
\end{corollary}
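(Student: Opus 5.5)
The corollary is the case $k=2$, $A_1=A$, $A_2=-A$ of \Cref{thm:matrix-measure}, combined with \Cref{thm:tuple-recovery} and \Cref{prop:moment-transfer}. The plan has three steps.

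\textbf{Step 1: identify the construction.} First I will check that the Fourier coupling \eqref{eq:phase-matrix} with $k=2$ is exactly the random-cut matrix. With $\omega=-1$, set $\varepsilon_i=(-1)^{\eta_i}$. Then
\[
M_\eta=\tfrac12\bigl(A+D_\varepsilon(-A)D_\varepsilon\bigr)=M_\varepsilon .
\]
Hence \eqref{eq:random-cut-characteristic} is \eqref{eq:phase-characteristic}, and $G_A$ and $F_A$ coincide with $G$ and $F$ from \eqref{eq:tuple-quotient}. The matrices $W_\lambda$ in \eqref{eq:averaged-resolvent} are therefore the residues $W^{(1)}_\lambda\succeq0$ produced in the proof of \Cref{thm:matrix-measure}. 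Since $A$ is real, they are real symmetric.

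\textbf{Step 2: support and moments.} By the symmetry $q_A(-x)=(-1)^nq_A(x)$, the roots of $q_A$ lie in $[-r_A,r_A]$, so the measure is supported on this interval. On $|x|>r_A$ the partial-fraction form expands as the geometric series $\sum_m \Hmom_m x^{-m-1}$. Reading off \eqref{eq:tuple-moments} with $k=2$ and $\diag((-A)^2)=\diag(A^2)=\Delta_A$ gives
\[
\Hmom_0=I,\qquad \Hmom_1=\frac A2,\qquad \Hmom_2=\frac{A^2+\Delta_A}{4}.
\]

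\textbf{Step 3: the inequalities.} The first inequality in \eqref{eq:matrix-resolvent} is \eqref{eq:tuple-symmetric} with $k=2$ and $R=r_A$: indeed
\[
A^2+\diag(A^2)\preceq 4r_A^2 I .
\]
Equivalently, it follows from $\sum_\lambda (r_A^2-\lambda^2)W_\lambda\succeq0$, which reads $r_A^2I-\Hmom_2\succeq0$. The second inequality, $\Hmom_{2s}\preceq r_A^2\Hmom_{2s-2}$, is the last assertion of \Cref{prop:moment-transfer} with $R=r_A$.

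There is no real obstacle here. The only point needing care is the identification of the $k=2$ Fourier coupling with the random-cut matrix $M_\varepsilon$, including the sign in front of the second input $-A$, so that the cited theorem genuinely applies to the objects defined in \eqref{eq:averaged-resolvent}.
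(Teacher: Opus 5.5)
Your proposal is correct and takes the same route as the paper. The paper obtains the corollary simply by specializing \Cref{thm:matrix-measure}, \Cref{thm:tuple-recovery}, and \Cref{prop:moment-transfer} to $k=2$, $(A_1,A_2)=(A,-A)$, with the identification $M_\eta=M_\varepsilon$ noted in the text just before the statement. Your explicit check of that identification, using $\omega=-1$, $D_\varepsilon^{-1}=D_\varepsilon$, and the sign on the second input $-A$, is exactly the detail the paper leaves implicit.
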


\section{Fourth moments of graph signings}
\label{sec:main-result}

Fix a signed adjacency matrix $A$ of a $d$-regular graph and set $\rho:=\norm A_{\rm op}$. 
Note that $\sqrt d\leq\rho\leq d$.
Replacing $A$ by $-A$ preserves $q_A$, so we may choose a unit eigenvector $u$ with $Au=\rho u$ and $\|u\|_2 = 1$.

The matrix weights from \Cref{cor:pair-recovery} give a scalar probability measure and its moments
\[
  \mu_u:=\sum_{\lambda:q_A(\lambda)=0}(u^\top W_\lambda u)\,\delta_\lambda,
  \qquad m_s:=\int\lambda^s\,d\mu_u(\lambda)=u^\top\Hmom_su.
\]
The support is contained in $[-r_A,r_A]$, and
\begin{equation*}
  m_0=1,\qquad m_1=\frac\rho2,\qquad
  m_2=\frac{\rho^2+d}{4}.
\end{equation*}
Furthermore, if $A$ satisfies the mixed-root condition, then the moment relationship in \eqref{eq:matrix-resolvent} yields
\begin{equation}
  m_4\leq r_A^2m_2
       \leq\frac12(\rho^2+d)(d-1).
  \label{eq:fourth-moment-upper}
\end{equation}
We will compare this upper bound with a lower bound obtained from the short walks of the graph. The moment identity and lower estimates below hold for every signing of a $d$-regular graph, without the mixed-root assumption.

\subsection{The fourth-moment identity}

We first derive an exact expression for the matrix-valued fourth-moment $\Hmom_4$.

\begin{lemma} \label{lem:fourth-moment-matrix}
Let $t_2=\tr(M_\varepsilon^2)$, we have
\begin{equation}
  \Hmom_4=\E M_\varepsilon^4
       -\frac12\Cov(t_2,M_\varepsilon^2),
  \label{eq:fourth-moment-covariance}
\end{equation}
where
\[
  \Cov(t_2,M_\varepsilon^2)
  =\E(t_2M_\varepsilon^2)-(\E t_2)(\E M_\varepsilon^2).
\]
In particular, it holds that
\begin{align}
  16\Hmom_4
  &=A^4+(3d-2)A^2+d^2I+(A\circ A)\circ A^2 -A\diag(A^3)-\diag(A^3)A+\diag(A^4).
  \label{eq:exact-fourth-matrix}
\end{align}
\end{lemma}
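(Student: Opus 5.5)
The plan is to obtain \eqref{eq:fourth-moment-covariance} by pushing the large-$x$ expansion from the proof of \Cref{thm:matrix-measure} two orders further. Then \eqref{eq:exact-fourth-matrix} follows by evaluating the resulting expectations over the random cut $\varepsilon$, walk by walk.

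\emph{Step 1 (series expansion).} Write $t_m=\tr M_\varepsilon^m$. Since $M_\varepsilon$ is bipartite, $t_1=t_3=0$. Hence
\[
\det(xI-M_\varepsilon)=x^n\exp\Bigl(-\sum_m \tfrac{t_m}{m x^m}\Bigr)=x^n\Bigl(1-\tfrac{t_2}{2x^2}+\tfrac{c_4}{x^4}+O(x^{-5})\Bigr),
\]
where $c_4=t_2^2/8-t_4/4$. Using $\adj(xI-M_\varepsilon)=\det(xI-M_\varepsilon)\sum_{m\ge0}M_\varepsilon^m x^{-m-1}$, I will expand $G_A$ to order $x^{n-5}$. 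Put $a=\E t_2/2$ and $b=\E c_4$. Then $q_A(x)=x^n(1-a x^{-2}+b x^{-4}+\cdots)$, so
\[
q_A^{-1}=x^{-n}\bigl(1+a x^{-2}+(a^2-b)x^{-4}+\cdots\bigr).
\]
Multiplying out, the coefficient of $x^{-5}$ in $F_A=G_A/q_A$ has two kinds of contributions. The scalar multiples of $I$ are $b-a^2+(a^2-b)=0$, so they cancel. The matrix contributions are $\E M_\varepsilon^4-\tfrac12\E(t_2M_\varepsilon^2)+a\,\E M_\varepsilon^2$. Comparing this with the residue expansion $\sum_m \Hmom_m x^{-m-1}$, as in the proof of \Cref{thm:matrix-measure}, gives \eqref{eq:fourth-moment-covariance}.

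\emph{Step 2 (the entries of $\E M_\varepsilon^4$).} Recall $(M_\varepsilon)_{ij}=a_{ij}\mathbf 1\{\varepsilon_i\ne\varepsilon_j\}$. Thus $(M_\varepsilon^4)_{ij}$ is a sum over walks $i,h,k,l,j$, each weighted by its sign product times the probability that every step of the walk crosses the cut. That probability is $2^{-(\#\text{distinct vertices}-1)}$ if the walk's underlying edge set is bipartite-consistent, i.e.\ contains no odd cycle, and $0$ otherwise. I will classify walks by their vertex-repetition pattern:
\begin{itemize}
\item walks on $5$ distinct vertices;
\item backtracking walks;
\item walks traversing a triangle, which get probability $0$ and produce the correction term $(A\circ A)\circ A^2$ together with the $A\diag(A^3)$-type terms;
\item closed $4$-walks, which produce $\diag(A^4)$.
\end{itemize}
I then regroup these contributions into $A^4$, $A^2$, $I$, and the correction terms, using $d$-regularity and $a_{ij}^2=1$ on edges.

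\emph{Step 3 (the covariance).} Write $t_2=2\sum_{e}\mathbf 1\{e\text{ cut}\}$. Then $\Cov(t_2,(M_\varepsilon^2)_{ij})=2\sum_e\Cov(\mathbf 1_e,\mathbf 1_{ih}\mathbf 1_{hj})$. Edge-cut events are pairwise independent, so only edges $e$ lying in the walk, or closing a triangle with it, give nonzero covariance. A short case analysis yields contributions proportional to $d$ times $A^2$ and $I$ parts, plus a triangle term.

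Adding the pieces and multiplying by $16$ should give \eqref{eq:exact-fourth-matrix}. I expect the main obstacle to be Step 2: the bookkeeping of the cut probabilities when the walk revisits vertices or closes a triangle. I would first tabulate each walk shape with its multiplicity and probability, and then sanity-check the final formula on the single-edge example from the proof of \Cref{thm:tuple-recovery}, and on a triangle.
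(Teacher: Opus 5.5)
Your proposal is correct. Step 1 is the paper's argument: the paper works in $z=1/x$ and never needs $c_4$ explicitly, since it cancels just as you found.

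Steps 2--3 take a different computational route. The paper expands $(A-DAD)^4$ into sixteen noncommutative words and evaluates each with one Rademacher contraction identity for $\E[DXDYDZD]$ (three pairings, minus a correction when all indices coincide). It gets the covariance from $\E[\tr(ADAD)\,DXD]=2(A\circ A)\circ X$. This is mechanical, and vertex-distinctness is handled automatically by the pairing formula.

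Your entrywise route rests on three facts: a triangle-free walk on $V$ distinct vertices survives the cut with probability $2^{1-V}$, a walk through a triangle survives with probability $0$, and cut indicators on an acyclic edge set are independent. This is more transparent, since it shows directly why triangles produce the $\diag(A^3)$ and $(A\circ A)\circ A^2$ terms. However, it needs careful inclusion--exclusion. After multiplying by $16$, the correction relative to $A^4$ is $2^{5-V}-1$ for triangle-free walks and $-1$ for triangle walks.

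There are two points to watch when you carry this out. Write $T=(A\circ A)\circ A^2$.

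First, the $-T$ in $16\E M_\varepsilon^4=A^4+(3d+2)A^2+d^2I-T-A\diag(A^3)-\diag(A^3)A+\diag(A^4)$ is a net term, not something the triangle walks produce on their own. The triangle walks give $-A\diag(A^3)-\diag(A^3)A+T$; the $+T$ comes from walks $i\,j\,h\,i\,j$, which lie in both families. Separately, the backtracking walks $i\,h\,i\,l\,j$ and $i\,h\,j\,l\,j$ each lose $a_{ij}^2(A^2)_{ij}$, because the backtrack vertex must avoid $j$ (resp.\ $i$). Together these give $-2T$.

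Second, contrary to your predicted shape, the covariance carries no factor of $d$. For $i\ne j$ one gets $\Cov(\mathbf 1_e,\mathbf 1_{ih}\mathbf 1_{hj})=\tfrac18(\mathbf 1[e=ih]+\mathbf 1[e=hj]-\mathbf 1[e=ij])$. Hence $\Cov(t_2,M_\varepsilon^2)=\tfrac14(2A^2-T)$, whose diagonal $d/2$ is just $\tfrac12(A^2)_{ii}$; there is no separate $dA^2$ or $dI$ piece.

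With these, $16\Hmom_4=16\E M_\varepsilon^4-8\Cov(t_2,M_\varepsilon^2)$ is exactly \eqref{eq:exact-fourth-matrix}.
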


The lemma follows by comparing the matrix coefficients of \eqref{eq:residue-expansion} and \eqref{eq:tuple-series} (after expanding into higher orders) and some technical but routine calculations.
We postpone the proof of the lemma to \Cref{app:fourth-moment}.

\begin{lemma}[Fourth moment in an extremal direction]
\label{lem:exact-fourth-moment}
Let $Au=\rho u$ with $\norm u_2=1$, and put
\[
  \tau:=\sum_{i=1}^n u_i^2(A^3)_{ii}.
\]
Then
\begin{align}
  16m_4
  &=\rho^4+(3d-2)\rho^2+d^2 -2\rho\tau +\sum_{i=1}^n u_i^2(A^4)_{ii} + 2\sum_{\{i,j\}\in E}u_iu_j(A^2)_{ij}.
  \label{eq:exact-fourth-scalar}
\end{align}
\end{lemma}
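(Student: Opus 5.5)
The plan is to take the quadratic form of the matrix identity \eqref{eq:exact-fourth-matrix} in \Cref{lem:fourth-moment-matrix} against $u$. Since $m_4=u^\top\Hmom_4u$, we get $16m_4=u^\top(16\Hmom_4)u$. We then evaluate each of the seven terms using $Au=\rho u$, $\norm u_2=1$, and the fact that $A$ is real symmetric with entries in $\{0,\pm1\}$ and zero diagonal.

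The first three terms are immediate. We have $u^\top A^4u=\rho^4$, $u^\top(3d-2)A^2u=(3d-2)\rho^2$, and $u^\top d^2Iu=d^2$. For the two cross terms, symmetry of $A$ gives $u^\top A\diag(A^3)u=(Au)^\top\diag(A^3)u=\rho\sum_iu_i^2(A^3)_{ii}=\rho\tau$. The same computation applies to $\diag(A^3)A$, so these two terms together contribute $-2\rho\tau$. The diagonal term is $u^\top\diag(A^4)u=\sum_iu_i^2(A^4)_{ii}$.

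The only term needing care is the Hadamard one, $u^\top\bigl((A\circ A)\circ A^2\bigr)u$. Since $a_{ij}\in\{0,\pm1\}$, the entry $(A\circ A)_{ij}=a_{ij}^2$ equals the indicator that $\{i,j\}\in E$. It vanishes on the diagonal because $A$ has zero diagonal. Hence
\[
  u^\top\bigl((A\circ A)\circ A^2\bigr)u=\sum_{i,j}\mathbf 1_{\{i,j\}\in E}\,u_iu_j(A^2)_{ij}=2\sum_{\{i,j\}\in E}u_iu_j(A^2)_{ij},
\]
where the factor $2$ counts the two ordered pairs for each unordered edge; this uses that $A^2$ is symmetric. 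Summing the seven contributions gives \eqref{eq:exact-fourth-scalar}.

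There is no real obstacle once \Cref{lem:fourth-moment-matrix} is granted; all of the difficulty lies in that matrix identity. The points to check are the sign and entry conventions: $a_{ij}^2$ is the edge indicator, and the cross terms collapse using only $Au=\rho u$, with no need for $u$ to commute with $\diag(A^3)$. One should also keep ordered versus unordered edge sums straight to get the factor $2$.
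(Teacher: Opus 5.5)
Your proposal is correct and matches the paper's proof. Both take the quadratic form of \eqref{eq:exact-fourth-matrix} at $u$, reduce the two cross terms to $\rho\tau$ via $Au=\rho u$, and read $(A\circ A)\circ A^2$ as the restriction of $A^2$ to the edges; your explicit accounting of the factor $2$ from ordered pairs is simply spelled out more fully than in the paper.
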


\begin{proof}
Since $u$ is an eigenvector of $A$ corresponding to the eigenvalue $\rho$, it holds that
\[
u^\top A\diag(A^3) u = u^\top \diag(A^3) A u = \rho \tau.
\]
Moreover, $(A\circ A)\circ A^2$ is the entrywise restriction of $A^2$ to the edges of $G$.
Therefore, the lemma follows by taking the quadratic form at $u$ on both sides of \eqref{eq:exact-fourth-matrix} in \Cref{lem:fourth-moment-matrix}. 
\end{proof}

\subsection{Estimating the triangle term}

The term $\tau$ is related to triangles in the input graph, and we call it triangle term.
It turns out that $\tau$ is very useful in deriving a lower bound on the fourth-moment.
We first lower bound the last two terms of \eqref{eq:exact-fourth-scalar} in terms of $d$, $\rho$ and $\tau$.

\begin{lemma}
\label{lem:edgewise-decomposition}
With the notation above,
\begin{align}
  \sum_{i=1}^n u_i^2(A^4)_{ii} + 2\sum_{\{i,j\}\in E}u_iu_j(A^2)_{ij} \geq d^2 +\frac{(\tau+\rho/2)^2}{d} - \frac d4.
  \label{eq:reduced-fourth-moment-bound}
\end{align}
\end{lemma}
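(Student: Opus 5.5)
The plan is to rewrite the left-hand side of \eqref{eq:reduced-fourth-moment-bound} as a sum over ordered adjacent pairs $(i,j)$. On each pair we complete a square, and then Cauchy--Schwarz relates that sum of squares to $\tau+\rho/2$. Write $c_{ij}:=(A^2)_{ij}$ and $j\sim i$ for adjacency.

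\emph{Step 1 (diagonal term).} Since $A$ is symmetric, $(A^4)_{ii}=\norm{A^2e_i}_2^2=\sum_j c_{ij}^2$. Here $c_{ii}=d$ by $d$-regularity. We keep only the terms with $j=i$ or $j\sim i$ and discard the non-adjacent $j\ne i$, which are nonnegative. Using $\sum_i u_i^2=1$, this gives
\[
  \sum_i u_i^2(A^4)_{ii}\;\geq\; d^2+\sum_i\sum_{j\sim i}u_i^2c_{ij}^2 .
\]
The edge term $2\sum_{\{i,j\}\in E}u_iu_jc_{ij}$ equals $\sum_i\sum_{j\sim i}u_iu_jc_{ij}$, a sum over ordered adjacent pairs.

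\emph{Step 2 (complete the square per ordered pair).} Set $x_{ij}:=a_{ij}c_{ij}$, so that $c_{ij}=a_{ij}x_{ij}$ because $a_{ij}^2=1$ on edges. For each ordered adjacent pair,
\[
  u_i^2c_{ij}^2+u_iu_jc_{ij}
  =u_i^2x_{ij}^2+u_iu_ja_{ij}x_{ij}
  =y_{ij}^2-\tfrac14u_j^2,
  \qquad y_{ij}:=u_ix_{ij}+\tfrac12a_{ij}u_j .
\]
Summing $-\tfrac14u_j^2$ over all ordered adjacent pairs gives $-\tfrac d4\sum_ju_j^2=-\tfrac d4$, again by $d$-regularity. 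Hence the left side of \eqref{eq:reduced-fourth-moment-bound} is at least $d^2-\tfrac d4+\sum_i\sum_{j\sim i}y_{ij}^2$.

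\emph{Step 3 (identify $\tau+\rho/2$ and apply Cauchy--Schwarz).} For the triangle term, $(A^3)_{ii}=\sum_{j}a_{ij}c_{ij}=\sum_{j\sim i}x_{ij}$, so $\tau=\sum_i\sum_{j\sim i}u_i^2x_{ij}$. For the eigenvalue term, $\rho=u^\top Au=\sum_i\sum_{j\sim i}a_{ij}u_iu_j$. Adding these,
\[
  \tau+\frac\rho2=\sum_i\sum_{j\sim i}u_iy_{ij}.
\]
Cauchy--Schwarz over ordered adjacent pairs gives
\[
  \Bigl(\tau+\frac\rho2\Bigr)^2\leq\Bigl(\sum_i\sum_{j\sim i}u_i^2\Bigr)\Bigl(\sum_i\sum_{j\sim i}y_{ij}^2\Bigr)=d\sum_i\sum_{j\sim i}y_{ij}^2 .
\]
Substituting this into Step 2 yields \eqref{eq:reduced-fourth-moment-bound}. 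This is the ``two-dimensional restriction at each vertex'' mentioned in \Cref{sec:overview}: at vertex $i$ the relevant quantities are spanned by $u_i(x_{ij})_j$ and $(a_{ij}u_j)_j$.

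The main obstacle is Step 2: one must find the right normalization of the completed square. The cross-term coefficient $\tfrac12$ must produce exactly $\tau+\rho/2$ after pairing with $u_i$, and at the same time the defect $-\tfrac14u_j^2$ must sum to the constant $-\tfrac d4$. Once the substitution $x_{ij}=a_{ij}c_{ij}$ is made, both requirements fix $y_{ij}$ as above, and the remaining steps are routine bookkeeping.
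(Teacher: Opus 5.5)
Your proof is correct and follows the paper's argument: discard the non-edge squares, complete the square on edges, then apply Cauchy--Schwarz against total weight $d$. The only difference is that you index by ordered pairs, so the square-completion defect is exactly $-u_j^2/4$. This avoids the paper's separate estimate $u_i^2u_j^2\le (u_i^2+u_j^2)^2/4$ and its division by $w_e$; that slack, $(u_i^2-u_j^2)^2/(4w_e)$ per edge, is absorbed into your Cauchy--Schwarz step.

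One small correction to your closing remark: the ``two-dimensional restriction at each vertex'' in the overview refers to the projection onto $\operatorname{span}\{e_i,Ae_i/\sqrt d\}$ in \Cref{lem:diagonal-third-moment}, not to this lemma.
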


\begin{proof}
We start with the first term. Since $(A^2)_{ii} = d$ and $\|u\|_2=1$, we have
\begin{equation*}
  \sum_{i=1}^n u_i^2(A^4)_{ii} = d^2+ \sum_{i=1}^n u_i^2 \sum_{j\ne i}(A^2)_{ij}^2 \geq d^2+ \sum_{\{i,j\} \in E} (u_i^2 + u_j^2) (A^2)_{ij}^2,
\end{equation*}
where the inequality holds by omitting those nonnegative terms on non-edges.
Then, the remaining goal is to lower bound
\begin{equation} \label{eq:two-sum}
\sum_{\{i,j\} \in E} \Big( (u_i^2 + u_j^2) (A^2)_{ij}^2 + 2u_iu_j(A^2)_{ij} \Big).
\end{equation}
For an edge $e=\{i,j\}$, we write
\[
  w_e=u_i^2+u_j^2,\qquad x_e=a_{ij}(A^2)_{ij},\qquad
  b_e=a_{ij}u_iu_j.
\]
Then, \eqref{eq:two-sum} becomes $\sum_{e \in E} (w_ex_e^2+2b_ex_e)$.
Note that, if $w_e = 0$ then $u_i=u_j= 0$ and $b_e=0$, so this edge contribute zero to the sum.
Thus, we may assume the summation is over edges with $w_e > 0$ and write
\begin{equation} \label{eq:edge-sums}
\sum_{e \in E} (w_ex_e^2+2b_ex_e) = \sum_{e \in E} w_e\left(x_e+\frac{b_e}{w_e}\right)^2  -\sum_{e \in E} \frac{b_e^2}{w_e}.
\end{equation}
Before we proceed, it is helpful to observe that
\[
\sum_{e \in E}  w_e x_e = \sum_{i=1}^n u_i^2 \sum_{j\neq i} a_{ij} (A^2)_{ij} = \sum_{i=1}^n u_i^2 (A^3)_{ii} = \tau \quad \text{and} \quad \sum_{e \in E} b_e = \frac12 u^\top A u = \frac{\rho}{2}.
\]
Similarly, as the input is $d$-regular and $\|u\|_2=1$, we have
\[
\sum_{e \in E} w_e = \sum_{i=1}^n u_i^2 \sum_{j\neq i} 1_{\{i,j\} \in E} = d.
\]
Now, using $b_e^2 = u_i^2 u_j^2 \leq w_e^2/4$ and $\sum_e w_e = d$, it controls the negative part of \eqref{eq:edge-sums}
\[ \sum_{e \in E} \frac{b_e^2}{w_e} \leq \frac{d}{4}. \]
Finally, Cauchy-Schwarz together with $\sum_e w_e = d$ yield that
\[
(\tau + \rho/2)^2 = \left( \sum_{e \in E} (w_e x_e + b_e) \right)^2 \leq d \sum_{e \in E} w_e\left(x_e+\frac{b_e}{w_e}\right)^2.
\]
Substituting these bounds into \eqref{eq:edge-sums} proves the lemma.
\end{proof}

It remains to bound $\tau$. Although signed triangle counts need not
be positive, the largest eigenvalue bounds them at each vertex.

\begin{lemma}[A local triangle bound]
\label{lem:diagonal-third-moment}
For every vertex $i$,
\begin{equation}
  (A^3)_{ii}\leq d\rho-\frac{d^2}{\rho}.
  \label{eq:triangle-compression}
\end{equation}
Consequently, $\tau\leq d\rho-d^2/\rho$.
\end{lemma}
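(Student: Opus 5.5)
The hint in the overview is to restrict $A$ to a two-dimensional space at each vertex. Fix a vertex $i$ and let $e_i$ be the standard basis vector. Put $w:=Ae_i$, the signed indicator of the neighbourhood of $i$. Since $A$ has zero diagonal and each row has $d$ entries $\pm1$, we have $e_i^\top w=0$ and $\|w\|_2^2=(A^2)_{ii}=d$. So $f_1:=e_i$ and $f_2:=w/\sqrt d$ are orthonormal.

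Let $P$ be the compression of $A$ to $\operatorname{span}\{f_1,f_2\}$. Its entries are
\[
  P=\begin{pmatrix} 0 & \sqrt d\\ \sqrt d & c\end{pmatrix},
  \qquad c:=f_2^\top Af_2=\frac{w^\top Aw}{d}=\frac{(A^3)_{ii}}{d}.
\]
Here $f_1^\top Af_1=a_{ii}=0$, $f_1^\top Af_2=e_i^\top Aw/\sqrt d=(A^2)_{ii}/\sqrt d=\sqrt d$, and $w^\top Aw=e_i^\top A^3e_i$.

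By Cauchy interlacing, or simply because the compression of $A$ has operator norm at most $\norm A_{\rm op}=\rho$, we get $\lambda_{\max}(P)\le\rho$. The largest eigenvalue of $P$ is
\[
  \lambda_{\max}(P)=\frac{c+\sqrt{c^2+4d}}{2}.
\]
If $c\le0$, the claim $(A^3)_{ii}\le d\rho-d^2/\rho$ holds trivially, because $\rho\ge\sqrt d$ makes the right side nonnegative. If $c>0$, the inequality $\frac{c+\sqrt{c^2+4d}}{2}\le\rho$ is equivalent to $\sqrt{c^2+4d}\le 2\rho-c$, where $2\rho-c>0$. Squaring gives $4d\le4\rho^2-4\rho c$, that is, $c\le\rho-d/\rho$. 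Multiplying by $d$ gives $(A^3)_{ii}\le d\rho-d^2/\rho$.

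An equivalent route is to say that $\rho I-P\succeq0$ forces $\det(\rho I-P)=\rho(\rho-c)-d\ge0$, which is the same inequality with no case split.

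For the consequence, note that $\tau=\sum_i u_i^2(A^3)_{ii}$ is a convex combination of the diagonal entries, since $\sum_i u_i^2=1$. So $\tau$ inherits the same bound. No step here is a real obstacle; the only point to check is the sign/case handling, which the determinant formulation avoids.
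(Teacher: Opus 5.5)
Your proof is correct and follows essentially the same route as the paper. Both compress $A$ onto $\operatorname{span}\{e_i, Ae_i/\sqrt d\}$ to get the $2\times 2$ matrix with entries $0,\sqrt d,(A^3)_{ii}/d$, then read off $\rho(\rho-(A^3)_{ii}/d)\ge d$ from $\det(\rho I_2 - C_i)\ge 0$ (your case-free determinant variant is exactly the paper's step). Finally both average over $i$ with weights $u_i^2$ to bound $\tau$.
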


\begin{proof}
The vectors $e_i$ and $Ae_i/\sqrt d$ are orthonormal.
Projecting $A$ to the the subspace $\operatorname{span}\{ e_i, Ae_i/\sqrt{d}\}$ gives
\[
  C_i= \begin{pmatrix} e_i^\top \\ e_i^\top A / \sqrt{d} \end{pmatrix} A \begin{pmatrix} e_i & A e_i / \sqrt{d} \end{pmatrix} = 
  \begin{pmatrix}
    0&\sqrt d\\
    \sqrt d&(A^3)_{ii}/d
  \end{pmatrix}.
\]
Since $A\preceq\rho I$, we have $C_i\preceq\rho I_2$. 
The determinant of $\rho I_2-C_i\succeq0$ gives $\rho(\rho-(A^3)_{ii}/d)\geq d$, proving \eqref{eq:triangle-compression}. 
This gives the bound on $\tau$ as $u$ is a unit vector with $\sum_i u_i^2 = 1$.
\end{proof}

Now, we are ready to derive a lower bound on the fourth-moment $m_4$.

\begin{lemma}[Lower bound on the fourth moment]
\label{lem:fourth-moment-lower}
For every signed adjacency matrix $A$ of a $d$-regular graph and an extremal unit eigenvector $u$ with $Au = \rho u$, it holds that
\begin{equation}
  16m_4\geq
  \rho^4+\left(2d-1+\frac1{4d}\right)\rho^2
  +2d^2-\frac{5d}{4}+\frac{d^3}{\rho^2}.
  \label{eq:fourth-moment-lower}
\end{equation}
\end{lemma}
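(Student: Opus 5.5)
The bound comes from combining the exact identity of \Cref{lem:exact-fourth-moment} with the two estimates just proved, and then minimizing over the one remaining unknown, the triangle term $\tau$.

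\textbf{Step 1: reduce to a function of $\tau$.} Substituting the lower bound \eqref{eq:reduced-fourth-moment-bound} of \Cref{lem:edgewise-decomposition} for the last two terms of \eqref{eq:exact-fourth-scalar} gives
\[
  16m_4\ \geq\ \rho^4+(3d-2)\rho^2+2d^2-\frac d4+f(\tau),
  \qquad
  f(\tau):=\frac{(\tau+\rho/2)^2}{d}-2\rho\tau .
\]
All of the graph-dependent information is now in $\tau$, and $f$ is a convex quadratic in $\tau$.

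\textbf{Step 2: locate the minimum of $f$ on the allowed range.} We know nothing about $\tau$ except the upper bound $\tau\leq\tau^*:=d\rho-d^2/\rho$ from \Cref{lem:diagonal-third-moment}. Setting $f'(\tau)=0$ gives the unconstrained minimizer $\tau_0=d\rho-\rho/2$. The key check is that $\tau^*\leq\tau_0$:
\begin{itemize}
  \item This inequality is equivalent to $d^2/\rho\geq\rho/2$, that is, $\rho^2\leq 2d^2$.
  \item That holds because $\rho\leq d$.
\end{itemize}
Hence $f$ is decreasing on $(-\infty,\tau^*]$, and $f(\tau)\geq f(\tau^*)$ for every admissible $\tau$. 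This monotonicity is the only non-mechanical step. It is also exactly why the lower bound on $\tau$ (signed triangles may be negative) is never needed.

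\textbf{Step 3: evaluate $f(\tau^*)$.} Write $\tau^*+\rho/2=X-Y$ with $X=\rho(d+\tfrac12)$ and $Y=d^2/\rho$. Then
\[
  \frac{(X-Y)^2}{d}
  =\rho^2\Bigl(d+1+\frac1{4d}\Bigr)-2d^2-d+\frac{d^3}{\rho^2},
  \qquad
  -2\rho\tau^*=-2d\rho^2+2d^2,
\]
so
\[
  f(\tau^*)=\Bigl(1-d+\frac1{4d}\Bigr)\rho^2-d+\frac{d^3}{\rho^2}.
\]
Adding this to the $\tau$-free terms from Step 1 gives
\[
  16m_4\geq \rho^4+\Bigl(2d-1+\frac1{4d}\Bigr)\rho^2+2d^2-\frac{5d}4+\frac{d^3}{\rho^2},
\]
which is \eqref{eq:fourth-moment-lower}.
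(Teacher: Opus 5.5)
Your proposal is correct and follows essentially the same route as the paper: you plug \Cref{lem:edgewise-decomposition} into \eqref{eq:exact-fourth-scalar}, observe that the resulting quadratic in $\tau$ is decreasing up to $d\rho-\rho/2\geq d\rho-d^2/\rho$ (using $\rho\leq d$), and evaluate at the bound from \Cref{lem:diagonal-third-moment}. Your algebra for $f(\tau^*)$ and for the final coefficients checks out.
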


\begin{proof}
Set
\[
  f(s)=-2\rho s+\frac{(s+\rho/2)^2}{d}.
\]
The minimum of $f$ occurs at $d\rho-\rho/2$. Since $\rho\leq d$, we have $d \rho - d^2/\rho < d\rho-\rho/2$. 
Thus, $f$ is decreasing on $(-\infty, d\rho - d^2/\rho]$, and \Cref{lem:diagonal-third-moment} gives
\begin{equation}
  f(\tau)\geq f\left( d \rho - \frac{d^2}{\rho} \right)
  = \frac{d^3}{\rho^2} - d +\Big(1-d+\frac{1}{4d}\Big) \rho^2.
  \label{eq:tau-quadratic-lower-bound}
\end{equation}
Using \Cref{lem:edgewise-decomposition} to lower bound \eqref{eq:exact-fourth-scalar}, we obtain
\[
	16m_4 \geq \rho^4 + (3d-2)\rho^2 + 2d^2 - \frac{d}{4} - 2 \rho \tau + \frac{(\tau + \rho/2)^2}{d}.
\]
Then, apply \eqref{eq:tau-quadratic-lower-bound} to obtain the final bound \eqref{eq:fourth-moment-lower}.
\end{proof}

The correction $d^2/\rho$ in the local triangle bound contributes $d^3/\rho^2$ to \eqref{eq:fourth-moment-lower}. 
Retaining this term is what improves the final constant.

\section{Two-sided bounds for graph signings}
\label{sec:signing-bounds}

Recall that \Cref{lem:mixed-root-signing} ensures the existence of a signing satisfying the mixed-root condition.
We now combine the moment estimates with this condition.

\subsection{General regular graphs}

We first start with a weaker second moment bound using \Cref{cor:pair-recovery}.

\begin{corollary}[Second-moment bound]
\label{cor:sqrt-seven}
Let $A$ be the signed adjacency matrix of a signing of a finite $d$-regular graph ($d\geq2$) satisfying $r_A^2\leq2(d-1)$. Then, $A$ obeys
\[
  \rho(A)\leq\sqrt{7d-8}<\sqrt7\sqrt{d-1}.
\]
\end{corollary}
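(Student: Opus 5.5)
We apply the matrix inequality $A^2+\Delta_A\preceq 4r_A^2 I$ from \Cref{cor:pair-recovery} directly. For a signing of a $d$-regular graph, every row of $A$ has exactly $d$ entries equal to $\pm1$, so $\Delta_A=\diag(A^2)=dI$. The inequality therefore becomes
\[
  A^2+dI\preceq 4r_A^2I .
\]

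Next we specialize to an extremal direction. Replacing $A$ by $-A$ does not change $q_A$, so we may choose a unit eigenvector $u$ with $Au=\rho u$, where $\rho=\rho(A)$. Taking the quadratic form at $u$ gives $\rho^2+d\le 4r_A^2$. Equivalently, for the scalar measure $\mu_u$ we have $m_2=(\rho^2+d)/4\le r_A^2$, since $\mu_u$ is supported in $[-r_A,r_A]$. The mixed-root condition $r_A^2\le 2(d-1)$ then yields
\[
  \rho^2\le 8(d-1)-d=7d-8 .
\]

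The strict inequality is elementary: $7d-8<7d-7=7(d-1)$. Taking square roots gives $\rho(A)\le\sqrt{7d-8}<\sqrt7\sqrt{d-1}$.

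There is no real obstacle here. The only points to verify are that $\diag(A^2)=dI$ holds exactly, and that the bound on $A^2$ controls both ends of the spectrum simultaneously. The latter holds because $u^\top A^2u=\rho^2$ for an eigenvector with either sign of eigenvalue, and the $-A$ symmetry covers the case where the extremal eigenvalue is negative.
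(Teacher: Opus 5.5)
Your proposal is correct and is essentially the paper's own argument: substitute $\Delta_A=dI$ into $A^2+\Delta_A\preceq 4r_A^2I$ from \Cref{cor:pair-recovery} and use the mixed-root condition to get $\rho^2+d\le 8(d-1)$. The detour through an extremal eigenvector and the $A\mapsto -A$ symmetry is harmless but unnecessary, because $A^2\preceq(4r_A^2-d)I$ already bounds the square of every eigenvalue of $A$ at once.
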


\begin{proof}
Since $\Delta_A=dI$ for $d$-regular graph, \eqref{eq:matrix-resolvent} gives $\rho(A)^2+d\leq4r_A^2\leq8(d-1)$.
\end{proof}

The fourth moment gives the improved bound in \Cref{thm:main-signing}.

\begin{proof}[Proof of \Cref{thm:main-signing}]
Fix a signing satisfying $r_A^2\leq2(d-1)$. 
Combining \eqref{eq:fourth-moment-upper} and \eqref{eq:fourth-moment-lower} yields
\begin{equation}
  \rho^4+\left(2d-1+\frac1{4d}\right)\rho^2
  +2d^2-\frac{5d}{4}+\frac{d^3}{\rho^2}
  \leq8(\rho^2+d)(d-1).
  \label{eq:combined-fourth-moment}
\end{equation}
Let $y=\rho^2/(d-1)>0$, subtract the right side and divide by $(d-1)^2$ to obtain
\begin{align}
  0\geq{}&\frac{(y+1)(y^2-7y+1)}{y}
  +\frac{y-\tfrac{21}{4}+3/y}{d-1}+\frac{\tfrac34+3/y}{(d-1)^2}
  +\frac{y}{4d(d-1)}+\frac1{(d-1)^3y}.
  \label{eq:factored-obstruction}
\end{align}
For
\[
  \frac{\rho^2}{d-1} = y\geq y_0:=\frac{7+3\sqrt5}{2}
       =\left(\frac{3+\sqrt5}{2}\right)^2,
\]
the first term is nonnegative and every remaining term is positive; in particular $y_0>21/4$. 
This contradicts \eqref{eq:factored-obstruction}, so
\[
  \rho(A)<\frac{3+\sqrt5}{2}\sqrt{d-1}.
\]
The signing was arbitrary among those satisfying the mixed-root condition.
There are finitely many such signings, and the set is nonempty by \Cref{lem:mixed-root-signing}. Its maximum therefore satisfies the same strict bound.
\end{proof}

\begin{remark}
\label{rem:asymptotic-constant}
We get some intuition about how does the constant $(3+\sqrt{5})/2$ appear.
Setting $\rho=c\sqrt d$ in \eqref{eq:combined-fourth-moment} and retaining the leading term as $d \to \infty$ gives
\[
  c^4+2c^2+2+c^{-2}\leq8(c^2+1).
\]
After multiplication by $c^2$, the difference factors as
\[
  c^6-6c^4-6c^2+1=(c^2+1)(c^4-7c^2+1),
\]
whose larger positive threshold is $c=(3+\sqrt5)/2$. Equation~\eqref{eq:factored-obstruction} verifies the normalization by $d-1$ for every degree. 
\end{remark}

\subsection{Graphs without triangles or four-cycles}

The exact fourth-moment formula simplifies when short cycles are absent. 
The following proof gives both bounds in \Cref{cor:short-cycles}.

\begin{proof}[Proof of \Cref{cor:short-cycles}]
As before, replace $A$ by $-A$ if necessary and choose a unit vector $u$ with $Au=\rho u$.

If the graph is triangle-free, then $(A^2)_{ij}=0$ on edges and $(A^3)_{ii}=0$ at every vertex. 
Also, $(A^4)_{ii}=\sum_j(A^2)_{ij}^2\geq d^2$. 
Hence \eqref{eq:exact-fourth-scalar} gives
\[
  16m_4\geq\rho^4+(3d-2)\rho^2+2d^2.
\]
Combining this with \eqref{eq:fourth-moment-upper}, we obtain
\begin{equation*}
  \rho^4+(6-5d)\rho^2-6d^2+8d\leq0.
\end{equation*}
As a quadratic in $\rho^2$, its constant term is negative for $d\geq2$, so $\rho^2$ must be at most the unique positive root, which proves \eqref{eq:triangle-free-bound}. 
Evaluating the left side at $\rho^2=6(d-1)$ gives $2d>0$, proving the strict inequality.

If the girth is at least five, every closed walk of length four is backtracking, and its sign product is $+1$. 
From each vertex, there are $d^2$ walks that return after two steps and then backtrack again, and $d(d-1)$ walks that reach distance two and retrace their steps.
Thus $(A^4)_{ii}=2d^2-d$, and the exact fourth moment is
\[
  16m_4=\rho^4+(3d-2)\rho^2+3d^2-d.
\]
Comparison with \eqref{eq:fourth-moment-upper} yields
\begin{equation*}
  \rho^4+(6-5d)\rho^2-5d^2+7d\leq0.
\end{equation*}
The positive root gives the first inequality in \eqref{eq:girth-five-bound}. 
For the second, write $y = \rho^2/(d-1)$. The left side becomes
\[
  (d-1)^2(y^2-5y-5)+(d-1)(y-3)+2.
\]
At $y=(5+3\sqrt5)/2$, the first term is zero and the others are positive. 
The positive root is therefore strictly smaller than $((5+3\sqrt5)/2)(d-1)$.
Finally, \Cref{lem:mixed-root-signing} gives a signing satisfying the condition in each graph class.
\end{proof}

\section{The \texorpdfstring{$K_8$}{K8} obstruction}
\label{sec:lower-bound}

We prove the lower bound in \Cref{thm:lower} using a signing with only two positive edges. 
On $K_8$ with vertex set $\{0,1,\ldots,7\}$, assign sign $+1$ to $\{1,2\}$ and $\{1,3\}$ and sign $-1$ to every other edge. 
Its signed adjacency matrix is
\begin{equation}
  A=\begin{pmatrix}
   0&-1&-1&-1&-1&-1&-1&-1\\
  -1& 0& 1& 1&-1&-1&-1&-1\\
  -1& 1& 0&-1&-1&-1&-1&-1\\
  -1& 1&-1& 0&-1&-1&-1&-1\\
  -1&-1&-1&-1& 0&-1&-1&-1\\
  -1&-1&-1&-1&-1& 0&-1&-1\\
  -1&-1&-1&-1&-1&-1& 0&-1\\
  -1&-1&-1&-1&-1&-1&-1& 0
  \end{pmatrix}.
  \label{eq:k8-matrix}
\end{equation}

\subsection{The adjacency spectrum}

Use the three vertex classes $\{1\}$, $\{2,3\}$, and $\{0,4,5,6,7\}$. 
Vectors supported on either of the last two classes and summing to zero are eigenvectors with eigenvalue $1$. 
These form a five-dimensional subspace. 
On vectors constant on each class, the action is represented by
\[
  Q=\begin{pmatrix}
    0&2&-5\\
    1&-1&-5\\
    -1&-2&-4
  \end{pmatrix},\qquad
  \det(xI-Q)=(x-3)(x^2+8x+11).
\]
Therefore
\begin{equation}
  \det(xI-A)=(x-1)^5(x-3)(x^2+8x+11),\qquad
  \rho(A)=4+\sqrt5.
  \label{eq:k8-spectrum}
\end{equation}

\subsection{The mixed polynomial}

\begin{proposition}
\label{prop:k8-mixed-characteristic}
The matrix in \eqref{eq:k8-matrix} satisfies
\begin{equation}
  q_A(x)=x^4(x^4-14x^2+25).
  \label{eq:k8-mixed-characteristic}
\end{equation}
\end{proposition}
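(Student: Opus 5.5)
We compute $q_A$ directly from the definition \eqref{eq:mixed-determinantal-polynomial} with $k=2$, that is,
\[
q_A(x)=2^{-8}\sum_{S\subseteq V}\det(xI-A[S])\,\det(xI+A[V\setminus S]),
\]
and we use the symmetry of the signing to collapse the $256$ terms into a few types. The automorphisms of the signed graph are the permutations that fix vertex $1$, permute $\{2,3\}$ and permute $\{0,4,5,6,7\}$; each preserves every entry of $A$. So $\det(xI-A[S])$ depends only on the triple $(a,b,c)=(|S\cap\{1\}|,\,|S\cap\{2,3\}|,\,|S\cap\{0,4,5,6,7\}|)$. The complement $V\setminus S$ has type $(1-a,\,2-b,\,5-c)$. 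This gives
\[
q_A(x)=2^{-8}\sum_{a=0}^{1}\sum_{b=0}^{2}\sum_{c=0}^{5}\binom{2}{b}\binom{5}{c}\,p_{a,b,c}(x)\,\tilde p_{1-a,2-b,5-c}(x),
\]
where $p_{a,b,c}(x)=\det(xI-A[S])$ and $\tilde p_{a,b,c}(x)=\det(xI+A[S])=(-1)^{|S|}p_{a,b,c}(-x)$.

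The second step is a closed form for $p_{a,b,c}$. We reuse the equitable-partition argument of the adjacency-spectrum subsection, restricted to $S$. Within $S\cap\{2,3\}$ and within $S\cap\{0,4,\dots,7\}$, vectors that sum to zero are eigenvectors with eigenvalue $1$, because both classes are cliques with all internal entries $-1$. This contributes a factor $(x-1)^{(b-1)_++(c-1)_+}$. The remaining factor is the characteristic polynomial of the quotient matrix on the nonempty classes, which is at most $3\times 3$. It is obtained from $Q$ by replacing the class sizes $1,2,5$ with $a,b,c$:
\[
Q_{a,b,c}=\begin{pmatrix}0& b&-c\\ a& -(b-1)&-c\\ -a&-b&-(c-1)\end{pmatrix},
\]
with the rows and columns of empty classes deleted. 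Each $p_{a,b,c}$ is then an explicit polynomial of degree $a+b+c\le 8$.

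As a consistency check and to shorten the computation, we fix the structure in advance. By $q_A(-x)=(-1)^8q_A(x)$, only even powers of $x$ appear. By the expansion used in the proof of \Cref{thm:matrix-measure}, the $x^6$ coefficient equals $-\tfrac12\E\tr M_\varepsilon^2=-\tfrac12\cdot\tfrac12\cdot 2\cdot 28=-14$. It therefore remains to verify three things: the $x^4$ coefficient is $25$, and the $x^2$ and $x^0$ coefficients vanish. For these lower coefficients we may alternatively use the random-cut form \eqref{eq:random-cut-characteristic}. There $\E\det(M_\varepsilon[S])$ is a sum over permutations of $S$ whose cycles all have even length, and each such cycle of length $L$ is weighted by $2^{1-L}$ times its signed edge product. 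This offers an independent route to the constant term and the $x^2$ term.

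The main obstacle is the bookkeeping of the $2\cdot3\cdot6=36$ weighted products and confirming the exact cancellation in the $x^0$ and $x^2$ coefficients. These cancellations are delicate, since individual terms are nonzero. We would first check the constant term through the perfect-matching/even-cycle expansion above, which is the most error-prone part, and then carry out the full type-sum symbolically to confirm that
\[
q_A(x)=x^4(x^4-14x^2+25).
\]
Its largest root then satisfies $r_A^2=7+\sqrt{24}=7+2\sqrt6$, as claimed in \Cref{thm:lower}.
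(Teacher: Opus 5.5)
Your setup is sound: the automorphism reduction to types $(a,b,c)$, the eigenvalue-$1$ factors, the quotient matrices $Q_{a,b,c}$, and the $x^6$ coefficient $-14$ are all correct. But the proposal stops exactly where the proposition begins. You say it remains to show that the $x^4$ coefficient is $25$ and that the $x^2$ and $x^0$ coefficients vanish, and you never do so. Those three numbers are the entire content of \eqref{eq:k8-mixed-characteristic}, so as written this is a plan for a computation, not a proof. Your description of the vanishing as a delicate cancellation also shows that the key structural fact is missing.

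The missing idea is a rank bound that holds for each cut separately. For any $\varepsilon$, the biadjacency block of $M_\varepsilon$ is $B=-J+R$, where $R$ records the positive edges crossing the cut. Both positive edges $\{1,2\},\{1,3\}$ contain vertex $1$, so $R$ lies in a single row or column. Hence $\operatorname{rank}B\le 2$ and $\operatorname{rank}M_\varepsilon\le 4$. Together with the bipartite symmetry, this gives $\det(xI-M_\varepsilon)=x^8-a_\varepsilon x^6+b_\varepsilon x^4$ for every $\varepsilon$. By \eqref{eq:random-cut-characteristic}, the $x^2$ and $x^0$ coefficients of $q_A$ therefore vanish with no cancellation at all. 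In particular $\det M_\varepsilon=0$ for each $\varepsilon$, so your even-cycle expansion of $\E\det M_\varepsilon$ is unnecessary.

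For $b_\varepsilon$, the sum of $4\times4$ principal minors, only $2+2$ splits of a $4$-set contribute. Each such minor equals $(\det B_{2\times2})^2\in\{0,4\}$, and it equals $4$ exactly when the alternately cut $4$-cycle is negative. A given $4$-cycle is cut alternately with probability $1/8$. There are $2(30-5)=50$ negative $4$-cycles, namely those containing exactly one positive edge. So $\E b_\varepsilon=25$.

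Your sum over the $36$ type products would in principle reach the same polynomial, but only after the full symbolic computation you defer.
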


\begin{proof}
For any vertex cut, the biadjacency matrix has the form $B=-J+R$, where $J$ is an all-ones matrix and $R$ is supported in one row or one column: the two positive edges share vertex $1$. 
Thus, $\operatorname{rank}B\leq2$, and the full cut matrix has rank at most
four. Bipartite symmetry gives
\[
  \det(xI-M_\varepsilon)=x^8-a_\varepsilon x^6+b_\varepsilon x^4.
\]
The coefficient $a_\varepsilon$ counts cut edges, so its average is $28/2=14$.

The coefficient $b_\varepsilon$ is the sum of the $4\times4$ principal minors. 
Only a $2+2$ cut of four vertices contributes. 
Its determinant is the square of a $2\times2$ signed biadjacency determinant, equal to $4$ when the corresponding $4$-cycle is negative and $0$ otherwise.
A fixed $4$-cycle is cut alternately with probability $2/2^4=1/8$.
Thus each negative $4$-cycle contributes $1/2$ to $\E b_\varepsilon$.

A $4$-cycle is negative exactly when it contains one of the two positive edges, but not both. 
Each positive edge belongs to $2\binom62=30$ unoriented $4$-cycles. 
Exactly five cycles contain both positive edges, one for each choice of the fourth vertex.
The number of negative cycles is therefore $30+30-2\cdot5=50$, and $\E b_\varepsilon=25$. 
Averaging the characteristic polynomials and applying \eqref{eq:random-cut-characteristic} proves the formula.
\end{proof}

\begin{proof}[Proof of \Cref{thm:lower}]
The nonzero squared roots of \eqref{eq:k8-mixed-characteristic} solve
$z^2-14z+25=0$. Hence
\[
  r_A^2=7+2\sqrt6<12=2(d-1).
\]
Together with \eqref{eq:k8-spectrum}, this gives
\[
  \frac{\rho(A)}{\sqrt{d-1}}
  =\frac{4+\sqrt5}{\sqrt6}\approx2.545864,
\]
proving the lower bound in \eqref{eq:Cmix-bounds}. The upper bound
follows from \Cref{thm:main-signing}.
\end{proof}

\section{Concluding remarks}
\label{sec:conclusion}

In this paper, we developed a spectral recovery framework for the two-sided control of signed adjacency matrices in the Bilu--Linial problem. 
The framework uses positive matrix-valued measures and moment inequalities to derive spectral bounds for $A_\sigma$ from the roots of $\chi[A_\sigma,-A_\sigma]$. 
Applying these comparisons under the mixed-root condition $r_{A_\sigma}\leq\sqrt{2(d-1)}$, we proved that every finite $d$-regular graph admits a signing $\sigma$ such that
\[
  \rho(A_\sigma) < \frac{3+\sqrt5}{2}\sqrt{d-1}.
\]
This improves the coefficient $2\sqrt2$ in the bound of Ravichandran and Srivastava~\cite{RS21}.

The positive matrix-valued representation is the main structural ingredient of the proof. 
Its exact moment identities connect the mixed roots to the input matrices and provide a systematic way to strengthen spectral comparisons. 
Applications of this representation to graph signings and related spectral problems merit further investigation.

Our bound does not reach the Ramanujan threshold. 
Moreover, the $K_8$ example shows that the mixed-root condition alone does not force every signing satisfying it to obey that threshold. 
More precisely, \Cref{thm:main-signing} and the $K_8$ example give
\[
  \frac{4+\sqrt5}{\sqrt6} \leq C_{\mathrm{mix}} \leq \frac{3+\sqrt5}{2},
\]
where $C_{\mathrm{mix}}$ is the optimal universal conversion constant under the mixed-root condition. 
Higher moments may improve the upper bound, although their use would require controlling the additional terms in their expansions. 
Determining the exact value of $C_{\mathrm{mix}}$ remains a natural extremal problem. 
Reaching the Ramanujan threshold through this framework would require additional information to distinguish among signings satisfying the mixed-root condition.

Finally, our proof is existential: the signing-selection step relies on the result of Ravichandran and Srivastava stated in \Cref{lem:mixed-root-signing}. 
A natural algorithmic question is whether a signing satisfying the mixed-root condition can be found in polynomial time. 
More generally, an efficient algorithm that attains the spectral guarantees proved here would make these results constructive.

\section*{Statement on AI Usage}

The authors have been working on the Bilu-Linial conjecture for some time.
Several weeks ago, we began using GPT-5.6 Sol Pro to explore proof approaches, verify ideas, and search for counterexamples.
The crucial idea of using the matrix-valued probability measure to recover spectral information was proposed by GPT-5.6; the $K_8$ example and some of the technical arguments were also provided by GPT-5.6.
An initial draft of this work was developed through iterative discussions with GPT-5.6 and later with GPT-6 Astra Pro.
Then, we manually verified all the proofs, simplified some of them, and improved the exposition of the manuscript.
The authors take responsibility for the correctness of the manuscript.

\section*{Acknowledgements}

This work was supported in part by National Key R\&D Program of China, Natural
Science Foundation of China, and Fuzhou University Talent Fund.

\bibliographystyle{plain}
\bibliography{references}

\clearpage

\appendix

\section{Proof of \Cref{lem:fourth-moment-matrix}}
\label{app:fourth-moment}

\subsection{Proof of \eqref{eq:fourth-moment-covariance}}
We first derive \eqref{eq:fourth-moment-covariance} by comparing matrix-valued coefficients in power series.
Recall that
\[
  q_A(x)=\E\det(xI-M_\varepsilon),\qquad
  G_A(x)=\E\adj(xI-M_\varepsilon),\qquad
  F_A(x)=\frac{G_A(x)}{q_A(x)}
        =\sum_{m\ge0}\frac{\Hmom_m}{x^{m+1}}.
\]
The last expansion holds for sufficiently large $|x|$.
For simplicity, we conduct a change of variable $z=1/x$ and define
\[
  p_\varepsilon(z):=\det(I-zM_\varepsilon),\qquad
  \overline p(z):=\E p_\varepsilon(z),\qquad
  J(z):=\E\adj(I-zM_\varepsilon).
\]
The scaling identities for determinants and adjugates give
\[
  q_A(1/z)=z^{-n}\overline p(z),\qquad
  G_A(1/z)=z^{-(n-1)}J(z).
\]
Consequently,
\begin{equation}
  \overline p(z)\sum_{m\ge0}\Hmom_m z^m=J(z).
  \label{eq:fourth-normalized-quotient}
\end{equation}
Indeed, $F_A(1/z)/z=J(z)/\overline p(z)$.
Since $\overline p(0)=1$, this quotient is analytic in a neighborhood of zero. 
There are only finitely many choices of $\varepsilon$, so all the expansions below hold in a common neighborhood of zero and may be averaged term by term.

\medskip
\noindent\emph{The scalar determinant coefficients.}
The cut matrix is $M_\varepsilon=(A-D_\varepsilon A D_\varepsilon)/2$, with $D_\varepsilon^2=I$. Therefore
\[
  D_\varepsilon M_\varepsilon D_\varepsilon=-M_\varepsilon.
\]
It follows that
\[
  p_\varepsilon(-z)
  =\det(I+zM_\varepsilon)
  =\det\bigl(D_\varepsilon(I-zM_\varepsilon)D_\varepsilon\bigr)
  =p_\varepsilon(z).
\]
Thus, $p_\varepsilon$ contains only even powers of $z$. Write
\begin{equation}
  p_\varepsilon(z)
  =1+c_2(\varepsilon)z^2+c_4(\varepsilon)z^4+O(z^6),
  \label{eq:fourth-determinant-series}
\end{equation}
where coefficients beyond the degree of the polynomial are understood to be zero. 
If $\theta_1,\ldots,\theta_n$ are the eigenvalues of $M_\varepsilon$, then
\[
  c_2(\varepsilon)
  =\sum_{i<j}\theta_i\theta_j
  =\frac12\left[
      \left(\sum_i\theta_i\right)^2-\sum_i\theta_i^2
    \right]
  =-\frac12\tr(M_\varepsilon^2).
\]
The last equality uses $\tr M_\varepsilon=0$, which also follows from its similarity to $-M_\varepsilon$.
We will not need an explicit formula for $c_4(\varepsilon)$.
For brevity, write $c_2,c_4$ for these random coefficients, and set $\overline c_2=\E c_2$ and $\overline c_4=\E c_4$. 
Averaging \eqref{eq:fourth-determinant-series} gives
\[
  \overline p(z)
  =1+\overline c_2z^2+\overline c_4z^4+O(z^6).
\]

\medskip
\noindent\emph{The matrix adjugate coefficients.}
For sufficiently small $|z|$, we have
\begin{align*}
  \adj(I-zM_\varepsilon)=p_\varepsilon(z)(I-zM_\varepsilon)^{-1}
  ={}I&+zM_\varepsilon
     +z^2\bigl(M_\varepsilon^2+c_2I\bigr)
    +z^3\bigl(M_\varepsilon^3+c_2M_\varepsilon\bigr)\\
    &+z^4\bigl(M_\varepsilon^4
                 +c_2M_\varepsilon^2+c_4I\bigr)
     +O(z^5).
\end{align*}
The coefficients $c_2$ and $c_4$ depend on the same realization $\varepsilon$ as $M_\varepsilon$. 
Thus, after averaging, the coefficient of $z^4$ in $J(z)$ is
\[
  \E M_\varepsilon^4
  +\E\bigl[c_2M_\varepsilon^2\bigr]
  +\overline c_4I.
\]
In particular, the middle term remains an expectation of a product.

\medskip
\noindent\emph{Coefficient comparison and cancellation.}
The constant terms in \eqref{eq:fourth-normalized-quotient} give
$\Hmom_0=I$. Comparing the coefficients of $z^2$ gives
\[
  \Hmom_2+\overline c_2\Hmom_0
  =\E M_\varepsilon^2+\overline c_2I,
\]
so $\Hmom_2=\E M_\varepsilon^2$.
At order $z^4$, the same identity gives
\[
  \Hmom_4+\overline c_2\Hmom_2+\overline c_4\Hmom_0
  =\E M_\varepsilon^4
    +\E\bigl[c_2M_\varepsilon^2\bigr]
    +\overline c_4I.
\]
Substituting $\Hmom_0=I$ and $\Hmom_2=\E M_\varepsilon^2$, the terms involving $\overline c_4$ cancel, leaving
\begin{align*}
  \Hmom_4
  =\E M_\varepsilon^4
    +\E\bigl[c_2M_\varepsilon^2\bigr]
    -(\E c_2)(\E M_\varepsilon^2)=\E M_\varepsilon^4
    -\frac12\left(
       \E\bigl[t_2M_\varepsilon^2\bigr]
       -(\E t_2)(\E M_\varepsilon^2)
      \right).
\end{align*}
This proves \eqref{eq:fourth-moment-covariance}.

The calculation is valid in every matrix dimension. 
Although $\adj(I-zM_\varepsilon)$ has degree at most $n-1$, its product representation above is an identity of convergent power series near zero. 
Hence every coefficient beyond degree $n-1$ vanishes automatically, as also follows from the Cayley--Hamilton theorem.
%In particular, the coefficient comparison remains valid when $n<5$.

\subsection{Proof of \eqref{eq:exact-fourth-matrix}}

Finally, we verify \eqref{eq:exact-fourth-matrix}. 
Throughout, we write $D=D_\varepsilon$, $C=DAD$, and $2M_\varepsilon=A-C$.

%\subsection{Rademacher contractions}

For fixed matrices $X,Y,Z$ of the same size,
\begin{align}
  \E[DXDYDZD]
  &=\diag(X)Y\diag(Z)+X\circ Y^\top\circ Z\notag\\
  &\quad+\diag\bigl(X\diag(Y)Z\bigr)
  -2\diag(X)\diag(Y)\diag(Z).
  \label{eq:four-rademacher-contraction}
\end{align}
In entry $(i,j)$, the summand indexed by $(h,\ell)$ has the factor $\varepsilon_i\varepsilon_h\varepsilon_\ell\varepsilon_j$.
Its expectation is nonzero exactly for the three pairings of the four indices. 
The last term corrects the triple count when all indices coincide.

We also use
\begin{equation}
  \E\bigl[\tr(ADAD)\,DXD\bigr]=2(A\circ A)\circ X.
  \label{eq:trace-rademacher-contraction}
\end{equation}
Expand $\tr(ADAD)=\sum_{i,j}a_{ij}^2\varepsilon_i\varepsilon_j$.
Since $a_{ii}=0$, only the two orderings of a distinct pair of indices survive in each off-diagonal entry. Both diagonal sides vanish.

%\subsection{The expected fourth power of a cut matrix}

Write $T=(A\circ A)\circ A^2$ and $D_j=\diag(A^j)$ within this calculation. 
Using $\diag(A)=0$, $D_2=dI$, and $(A\circ A)\circ A=A$, the noncommutative expansion of $(A-C)^4$ reduces to the expectations in \Cref{tab:fourth-words}. 
The sign of each word is $(-1)^s$, where $s$ is the number of copies of $C$.

\begin{table}[htbp]
\centering
\renewcommand{\arraystretch}{1.15}
\begin{tabular}{@{}lll@{}}
\toprule
Copies of $C$ & Words & Expectation of each word\\
\midrule
$0$ & $A^4$ & $A^4$\\
$1$ & $A^3C,\ A^2CA,\ ACA^2,\ CA^3$ & $0$\\
$2$ & $A^2C^2,\ AC^2A,\ C^2A^2$ & $dA^2$\\
    & $ACAC,\ CACA$ & $A^2$\\
    & $CA^2C$ & $d^2I+T$\\
$3$ & $AC^3,\ C^3A$ & $AD_3,\ D_3A$, respectively\\
    & $CAC^2,\ C^2AC$ & $T$\\
$4$ & $C^4$ & $D_4$\\
\bottomrule
\end{tabular}
\caption{Terms in the fourth-power expansion.}
\label{tab:fourth-words}
\end{table}

For example, $C^2=DA^2D$, so $\E A^2C^2=A^2\diag(A^2)=dA^2$. 
Also, \eqref{eq:four-rademacher-contraction} gives $\E(CAC)=A$ and $\E(CA^2C)=d^2I+T$. 
The other entries follow from the same identity. 
Summing with the indicated signs yields
\begin{equation}
  16\E M_\varepsilon^4
  =A^4+(3d+2)A^2+d^2I-T-AD_3-D_3A+D_4.
  \label{eq:expected-fourth-cut}
\end{equation}

%\subsection{The covariance correction}

Since
\[
  M_\varepsilon^2=\frac14(A^2-AC-CA+C^2),\qquad
  t_2-\E t_2=-\frac12\tr(ADAD),
\]
\eqref{eq:trace-rademacher-contraction} gives
\[
  \E[\tr(ADAD)C]=2A,\qquad
  \E[\tr(ADAD)C^2]=2T.
\]
It follows that
\begin{align}
  \Cov(t_2,M_\varepsilon^2)
  =-\frac18\E\bigl[\tr(ADAD)(A^2-AC-CA+C^2)\bigr]=\frac14(2A^2-T).
  \label{eq:fourth-covariance-explicit}
\end{align}
Substituting \eqref{eq:expected-fourth-cut} and \eqref{eq:fourth-covariance-explicit} into \eqref{eq:fourth-moment-covariance} gives
\[
  16\Hmom_4=A^4+(3d-2)A^2+d^2I+T-AD_3-D_3A+D_4,
\]
which is \eqref{eq:exact-fourth-matrix}.

%\subsection{The remainder in square completion}

The difference between the two sides of \eqref{eq:reduced-fourth-moment-bound} has an explicit nonnegative form. 
Retain the notation $w_e,x_e,b_e$ from \Cref{lem:edgewise-decomposition}, and put
\[
  y_e=x_e+b_e/w_e\quad(w_e>0),\qquad
  \bar y=(\tau+\rho/2)/d,
\]
with $y_e=0$ for $w_e=0$. Let
\[
  R_0=\sum_i u_i^2
  \sum_{\substack{j\ne i\\\{i,j\}\notin E}}(A^2)_{ij}^2.
\]
The left side minus the right side of \eqref{eq:reduced-fourth-moment-bound} is exactly
\begin{equation*}
  R_0+\sum_e w_e(y_e-\bar y)^2
  +\frac14\sum_{\substack{e=\{i,j\}\\w_e>0}}
                 \frac{(u_i^2-u_j^2)^2}{w_e}.
\end{equation*}
The terms are the discarded nonedge squares, the weighted variance in Cauchy--Schwarz, and the remainder in $u_i^2u_j^2\leq(u_i^2+u_j^2)^2/4$.

\end{document}